\documentclass[12pt]{article}
\usepackage{preprint}


\usepackage{amsmath,amssymb}
\usepackage{graphicx}
\usepackage[colorlinks=true,citecolor=black,linkcolor=black,urlcolor=blue]{hyperref}



\dateline{May 29, 2019}

\MSC{05A17, 05A18}

%
%
\Copyright{The author. Released under the CC BY license (International 4.0).}


\title{Partition function of the cyclic group}


\author{Steven S Poon\\
\small\tt steven.s.poon@gmail.com\\}

\begin{document}

\maketitle


\begin{abstract}
	This paper addresses the problem of finding $Q_{m,t}\left(n\right)$, the number of possible ways to partition any member $n$ of the cyclic group $\mathbb{Z}/m\mathbb{Z}$ into $t$ distinct parts. When $m$ is odd, it was previously known that the number of partitions of the identity element $0\bmod m$ with distinct parts is equal to the number of possible bi-color necklaces with $m$ beads. This paper will expand upon this result by showing the equivalence between $Q_{m,t}\left(n\right)$ and the number of bi-color necklaces meeting certain periodicity requirements, even when $m$ is even.
\end{abstract}

\section{Introduction}\label{Sec:IntroTop}

The problem of finding the number of possible ways to partition a positive integer with various limitations and conditions on the parts is a well-studied one since Euler wrote down some of the seminal results. There is no reason why one cannot generalize this problem to any abelian group. Given an abelian group $G$ and a fixed member $n$, the group partition function can be defined as the number of distinct multisets of members of $G$ such that the group sum is equal to $n$. Although such a broad scope is just described, this paper will focus on the specific case where $G = \mathbb{Z}/m\mathbb{Z}$, isomorphic to the cyclic group of $m$ elements. The partitions will be required to have distinct parts.

The task of counting the number of subsets of $\mathbb{Z}/m\mathbb{Z}$ with a sum equal to $0\bmod m$, where $m$ is odd, was given as an exercise in \cite{Stanley}, which noted an interesting fact that the number of these subsets matches the number of bi-color necklaces of $m$ beads. The more recent work \cite{Chan} expanded upon this result and showed that a similar correspondence holds when the beads of the necklaces are allowed to have more than two colors.

In both of the works above, the case for even values of $m$ remains unexplained. Also, the partitioning of members of $\mathbb{Z}/m\mathbb{Z}$ other than the identity element was not worked out explicitly. This paper will seek to cover these two aspects for the case when repeating parts are not allowed. In particular, the function $Q_{m,t}\left(n\right)$, which is defined here as the partition function of $n$ as a member of $\mathbb{Z}/m\mathbb{Z}$ with $t$ distinct parts, will be analyzed in depth. By doing so, one can obtain interesting results regarding two different formulations of the same problem:

\begin{enumerate}
	\item Urn model. An urn contains $m$ marble balls, each labeled with a unique integer from the set $\{0,\ldots,m-1\}$. A fixed number $t$ of these marble balls are chosen from the urn at random and without replacement. Let numbers on the drawn balls form the set $\{a_1,\ldots,a_t\}$, and the statistic $N$ is calculated as $N=a_1+\ldots+a_n$. What is the probability of obtaining a certain value of $N\bmod m$ after a random drawing?
	\item Bi-color necklaces. Construct a bijection between the possible ways to partition $n\bmod m$ into $t$ distinct parts and the possible bi-color necklaces of $t$ black beads and $m-t$ white beads meeting certain periodicity requirements.
\end{enumerate}

The key findings regarding $Q_{m,t}\left(n\right)$ applicable to these scenarios are summarized below.

\subsection{Statements of Main Results}\label{SubSec:IntroMainResults}

\begin{theorem}
	\label{Thm:CmPartixForm1}
	Let $m$ and $t$ be integers such that $m>0$, and $n$ be any integer modulo $m$, then:
	
	$$Q_{m,t}\left(n\right)=\frac{1}{m}\sum_{d\mid\left(m,t\right)} \left(-1\right)^{t\left(d+1\right)/d}\binom{m/d}{t/d}c_d\left(n\right)$$
\end{theorem}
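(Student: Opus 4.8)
The plan is to count the objects directly with a bivariate generating function and then isolate the residue class $n \bmod m$ using a discrete Fourier (roots-of-unity) filter. First I would form $F(x,y)=\prod_{j=0}^{m-1}(1+xy^{j})$, whose coefficient of $x^{t}y^{s}$ is the number of $t$-element subsets of $\{0,\ldots,m-1\}$ summing (over $\mathbb{Z}$) to $s$. Writing $\omega=e^{2\pi i/m}$, the subsets whose sum is $\equiv n \pmod m$ are selected by averaging, so that $Q_{m,t}(n)=\frac{1}{m}\sum_{a=0}^{m-1}\omega^{-an}\,[x^{t}]F(x,\omega^{a})$, where $[x^t]$ denotes coefficient extraction.

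The heart of the computation is evaluating $F(x,\omega^{a})$. Setting $g=\gcd(a,m)$, the factors $\omega^{aj}$ range over all $(m/g)$-th roots of unity, each attained exactly $g$ times as $j$ runs from $0$ to $m-1$. I would then establish the key algebraic identity $\prod_{\zeta^{k}=1}(1+x\zeta)=1-(-x)^{k}$, most directly from $\prod_{\zeta^{k}=1}(T-\zeta)=T^{k}-1$ evaluated at $T=-1/x$, which gives the closed form $F(x,\omega^{a})=\bigl(1-(-x)^{m/g}\bigr)^{g}$. Expanding by the binomial theorem, the coefficient $[x^{t}]$ is nonzero precisely when $(m/g)\mid t$, and after setting $d=m/g$ it equals $\binom{m/d}{t/d}$ times the sign $(-1)^{t/d}(-1)^{t}=(-1)^{t(d+1)/d}$.

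Finally I would reorganize the outer sum over $a$ by grouping terms according to $g=\gcd(a,m)$. For a fixed $g$ the admissible $a$ are $a=gb$ with $b$ coprime to $m/g$, so $\sum_{a:\gcd(a,m)=g}\omega^{-an}=\sum_{\gcd(b,m/g)=1}e^{-2\pi i b n/(m/g)}$, which is exactly the Ramanujan sum $c_{m/g}(n)=c_{d}(n)$. Substituting $d=m/g$ throughout converts the divisor condition $(m/g)\mid t$ together with $d\mid m$ into the single condition $d\mid(m,t)$, and collecting everything yields the claimed formula. As a sanity check, the $a=0$ term corresponds to $d=1$, contributing the expected main term $\frac{1}{m}\binom{m}{t}$.

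The main obstacle, and the step to execute with care, is the bookkeeping in the last paragraph: correctly sorting the $a$ into gcd-classes, verifying that the resulting character sum is the Ramanujan sum (using its evenness $c_d(n)=c_d(-n)$), and confirming that the sign $(-1)^{t(d+1)/d}$ and the index $\binom{m/d}{t/d}$ survive the re-indexing without an off-by-one in the divisor. The generating-function identity itself is routine once stated; the delicate part is the simultaneous translation of the two divisibility constraints into $d\mid(m,t)$.
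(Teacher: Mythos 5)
Your proposal is correct, and while it shares the paper's two pillars---evaluating a product generating function at roots of unity, then recognizing Ramanujan sums after grouping the filter sum by $\gcd(a,m)$---it is a genuinely leaner route. Three differences stand out. First, you keep the $j=0$ factor, so $\prod_{j=0}^{m-1}(1+xy^{j})$ generates $Q_{m,t}(n)$ directly; the paper instead works with $\prod_{j=1}^{m-1}(x+zy^{j})$, which generates the auxiliary quantity $Q^{*}_{m,t}(n)$ (partitions avoiding the part $0$) and must be recombined through $Q_{m,t}(n)=Q^{*}_{m,t-1}(n)+Q^{*}_{m,t}(n)$ (Proposition \ref{Prop:ModPartixConv}). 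Second, your evaluation $F(x,\omega^{a})=\bigl(1-(-x)^{m/g}\bigr)^{g}$ rests on the elementary observation that $\omega^{aj}$, $0\leq j<m$, covers each $(m/g)$-th root of unity with multiplicity exactly $g$, combined with $\prod_{\zeta^{k}=1}(T-\zeta)=T^{k}-1$; the paper proves the corresponding identity (Proposition \ref{Prop:FNewForm}, carrying an extra factor $1/(1+z)$ precisely because the $j=0$ term is missing) by a much heavier argument with cyclotomic polynomials and the divisor-map lemma (Proposition \ref{Prop:DivisorEndo}). Third, you need only one roots-of-unity filter, in the variable $n$, extracting $[x^{t}]$ directly, whereas the paper runs a second discrete Fourier transform in $t$ modulo $s$ (the functions $J_{m,t,s}$ and $\ddot{Q}_{m,t,s}^{*}$) and recovers the theorem only at the end by setting $s=m+1$. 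What the paper's detour buys is the more general closed form for the residue-class sums $\ddot{Q}_{m,t,s}(n)=\sum_{u\equiv t\bmod s}Q_{m,u}(n)$ (Proposition \ref{Prop:ModPartixSkipNewForm}); what your route buys is brevity and elementarity. Your bookkeeping in the delicate step is also sound: $[x^{t}]\bigl(1-(-x)^{d}\bigr)^{m/d}=\binom{m/d}{t/d}(-1)^{t/d+t}$ with $t/d+t=t(d+1)/d$, the conditions $d\mid m$ and $d\mid t$ merge into $d\mid(m,t)$, and the gcd-class sum equals $c_{d}(n)$ by the evenness of Ramanujan sums, exactly as you note.
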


The quantity $c_d\left(n\right)$ is Ramanujan's sum. Recall that, if $d$ is a divisor of $m$, then $c_d\left(n\right)=c_d\left(n'\right)$ if $n\equiv n'\bmod m$. Hence the domain of $c_d\left(n\right)$ will be treated as $\mathbb{Z}/m\mathbb{Z}$ whenever appropriate.

If one plays a game based on the urn model described above, it becomes interesting to understand exactly how $Q_{m,t}\left(n\right)$ behaves as a function of $n$. Particularly one might want to know which number $n\bmod m$ is most likely to win. The answer is given in Theorem \ref{Thm:PartixMax}.

\begin{theorem}
	\label{Thm:PartixMax}
	For fixed integers $m$ and $t$ such that $m>0$ and $0\leq t\leq m$, the function $Q_{m,t}\left(n\right)$ is maximized precisely when the values of $n$ meet one of the following requirements:
	\begin{enumerate}
		\setlength\itemsep{0em}
		\item $\left(n,t,m\right)=\left(t,m\right)$ if $\left(t,m\right)$ is odd, or
		\item $\left(n,t,m\right)=\left(t,m\right)$ if $\left(t,m\right)$ is even and $v_2\left(\left(t,m\right)\right)<v_2\left(t\right)$, or
		\item $\left(n,t,m\right)=\left(t,m\right)/2$ if $\left(t,m\right)$ is even and $v_2\left(\left(t,m\right)\right)=v_2\left(t\right)$.
	\end{enumerate}
\end{theorem}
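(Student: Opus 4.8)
The plan is to read off $Q_{m,t}(n)=\frac{1}{m}\sum_{d\mid(m,t)}A_d\,c_d(n)$ from Theorem~\ref{Thm:CmPartixForm1}, with $A_d=(-1)^{t(d+1)/d}\binom{m/d}{t/d}$, and to note that only the $c_d(n)$ depend on $n$. Writing $t(d+1)/d=t+t/d$ gives $A_d=(-1)^{t+t/d}\binom{m/d}{t/d}$, where the binomial factor is a positive integer since $0\le t/d\le m/d$. I would then record two facts about Ramanujan's sum, both consequences of Hölder's identity $c_d(n)=\mu\!\left(d/e\right)\phi(d)/\phi\!\left(d/e\right)$ with $e=(d,n)$: first, $c_d(n)\le\phi(d)$ with equality iff $d\mid n$; second, for even $d$, $c_d(n)\ge-\phi(d)$ with equality iff $(d,n)=d/2$. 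Each follows at once from $|\mu|\le1$ and $\phi(d/e)\ge1$, the boundary cases forcing $d/e=1$ and $d/e=2$ respectively.

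The core of the argument is the per-term bound $A_d\,c_d(n)\le|A_d|\,\phi(d)$, valid for every admissible $n$: when $A_d>0$ it is the first Ramanujan fact, and when $A_d<0$ it is the second (here $d$ will turn out to be even whenever $A_d<0$). Summing yields $Q_{m,t}(n)\le\frac{1}{m}\sum_{d\mid(m,t)}|A_d|\,\phi(d)$, and---crucially---equality in a sum of upper-bounded terms forces equality in every term. Thus an $n$ is a maximizer precisely when $c_d(n)$ attains its relevant extreme ($+\phi(d)$ or $-\phi(d)$) for all $d\mid(m,t)$ simultaneously. The largest divisor $d=(t,m)$ will serve as the binding constraint that pins down $(n,t,m)$, after which I must check that the remaining divisors impose nothing new.

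It remains to determine the signs of $A_d$, which I would do through the $2$-adic valuation. Since $t/d$ is odd exactly when $v_2(d)=v_2(t)$, and $v_2((t,m))=\min(v_2(t),v_2(m))\le v_2(t)$, three cases arise. If $(t,m)$ is odd, or if $(t,m)$ is even with $v_2((t,m))<v_2(t)$, then $t+t/d$ is even for every $d\mid(t,m)$, so all $A_d>0$; the maximizers are the $n$ with $c_d(n)=\phi(d)$ for all $d$, which by the $d=(t,m)$ constraint means $(t,m)\mid n$, i.e. $(n,t,m)=(t,m)$. If instead $(t,m)$ is even with $v_2((t,m))=v_2(t)$, then $A_d<0$ exactly for the (even) $d$ with $v_2(d)=v_2(t)$---in particular for $d=(t,m)$---and $A_d>0$ otherwise; the maximizers must make $c_{(t,m)}(n)=-\phi((t,m))$, i.e. $(n,t,m)=(t,m)/2$. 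In each case I would finish by verifying that the displayed condition on $(n,t,m)$ is not only necessary but sufficient: when $(n,t,m)=(t,m)$ every $d\mid(t,m)$ divides $n$, and when $(n,t,m)=(t,m)/2$ a short valuation check shows each positively-weighted $d$ still divides $n$ while each negatively-weighted $d$ satisfies $(d,n)=d/2$, so every term is simultaneously extremal.

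The main obstacle I anticipate is the bookkeeping for the mixed-sign case: one must confirm both that $-\phi(d)$ really is the minimum of $c_d$ (which needs $d$ even, exactly where the negative weights sit) and that the single condition $(n,t,m)=(t,m)/2$ forces the correct---and genuinely different---extreme for every divisor at once. Establishing that the $d=(t,m)$ term alone dictates the optimal value of $(n,t,m)$, with all smaller divisors falling into line, is the step where the three-way split and the hypothesis comparing $v_2((t,m))$ to $v_2(t)$ genuinely earn their place.
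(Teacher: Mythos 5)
Your proof is correct, but it takes a genuinely different route from the paper. The paper first rewrites Theorem~\ref{Thm:CmPartixForm1} (via Propositions~\ref{Prop:MobiusInvVar}, \ref{Prop:RamanujanSumExp}, and \ref{Prop:ModPartixFuncNewForm}) as $Q_{m,t}(n)=\sum_{d\mid(n,t,m)}(-1)^{t(d+1)/d}A(m/d,t/d)$ with non-negative weights $A$ (non-negativity coming from $A(m,t)=Q_{m,t}(1)$); the $n$-dependence is then purely through which divisors of $(t,m)$ divide $n$, and maximization becomes a term-selection argument: include all positively-signed divisors, exclude the negatively-signed ones (those with $v_2(d)=v_2(t)\geq 1$), whose complement is exactly the divisor set of $(t,m)$ or $(t,m)/2$. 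You instead stay with the Ramanujan-sum expansion of Theorem~\ref{Thm:CmPartixForm1} itself and use H\"{o}lder's identity to bound each term, characterizing equality ($c_d(n)=\phi(d)$ iff $d\mid n$; $c_d(n)=-\phi(d)$ iff $(d,n)=d/2$, requiring $d$ even) and then showing the single constraint at $d=(t,m)$ forces all other divisors into their extremal configuration. The sign analysis via $2$-adic valuations is the same in both. Your route buys two things: it bypasses the paper's $A$-function machinery entirely, needing only classical facts about $c_d(n)$; and your coefficients $\binom{m/d}{t/d}$ are manifestly nonzero, so the ``equality in every term'' step is airtight --- by contrast, the paper's ``precisely when'' implicitly requires the leading weight $A\bigl(m/(t,m),t/(t,m)\bigr)$ to be strictly positive, which the paper only establishes as non-negative (strict positivity does hold, since $(m/(t,m),t/(t,m))=1$ makes that $Q$ constant and equal to $\binom{m/(t,m)}{t/(t,m)}/(m/(t,m))$, but this is never said in the proof). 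What the paper's approach buys in exchange is the decomposition of Proposition~\ref{Prop:ModPartixFuncNewForm} itself, which carries additional structural information about the diagram of $Q_{m,t}(n)$ beyond what is needed for this theorem. One small inaccuracy in your write-up: the bound $c_d(n)\geq-\phi(d)$ holds for all $d$, not just even $d$; evenness is needed only for that lower bound to be attained, which is exactly where your parenthetical remark that negative weights occur only at even $d$ does the work.
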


The function $v_2\left(t\right)$ is the 2-adic valuation of $t$.

The correspondence between partitions of $n$ modulo $m$ with distinct parts and necklaces can be expanded to cover $n\not\equiv 0\bmod m$ and even values of $m$ using Theorem \ref{Thm:NecklacePartixBijection} below. As a reminder, a necklace is periodic if one can break it at $u+1$ points to create $u$ contiguous segments, such that the pattern of black and white beads as an ordered list is the same for all segments. The largest value of $u$ possible will be referred to as the frequency of the necklace within this paper.

\begin{theorem}
	\label{Thm:NecklacePartixBijection}
	Let $m$ and $t$ be integers such that $m>0$ and $0\leq t\leq m$, then:
	\begin{enumerate}
		\setlength\itemsep{0em}
		\item $Q_{m,t}\left(n\right)=\left|\mathcal{N}_{m,t,\left< n\right>}\right|$ whenever the conditions $v_2\left(m\right)\geq v_2\left(t\right)\geq 1$ are not met, or
		\item $Q_{m,t}\left(n\right)=\left|\mathcal{N}_{m,t,\left< n\right>}\right|$ if $v_2\left(n\right)<v_2\left(t\right)-1$, or
		\item $Q_{m,t}\left(n\right)=\left|\mathcal{N}_{m,t,\left< 2n\right>}\right|$ if $v_2\left(n\right)=v_2\left(t\right)-1$, or
		\item $Q_{m,t}\left(n\right)=\left|\mathcal{N}_{m,t,\left< (t,n)/2\right>}\right|$ if $v_2\left(n\right)\geq v_2\left(t\right)$.
	\end{enumerate}
\end{theorem}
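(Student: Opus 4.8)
The plan is to derive the identity directly from the closed form in Theorem \ref{Thm:CmPartixForm1}, matching it against a Burnside-type count of the relevant necklaces. First I would fix the encoding: represent a $t$-subset of $\{0,\dots,m-1\}$ as a length-$m$ binary word (black in the chosen positions), so that $Q_{m,t}(n)$ counts words whose position-sum is $\equiv n\bmod m$, and note that rotation by one shifts the sum by $t\bmod m$; hence the natural quotient object is a bi-color necklace, and a necklace of frequency $e$ (minimal period $m/e$, which forces $e\mid(m,t)$) corresponds to an aperiodic necklace of length $m/e$ with $t/e$ black beads. Writing $\mathrm{Ap}(M,T)=\frac1M\sum_{d\mid(M,T)}\mu(d)\binom{M/d}{T/d}$ for the number of aperiodic necklaces, I would take as the working definition $|\mathcal{N}_{m,t,\langle k\rangle}|=\sum_{e\mid(k,t,m)}\mathrm{Ap}(m/e,\,t/e)$, i.e. the number of necklaces whose period is a multiple of the order of $k$ in $\mathbb{Z}/m\mathbb{Z}$; for $k=0$ this is all necklaces, recovering the classical Stanley/Chan correspondence.

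The core step is a single all-positive-sign master identity: for any argument $k$, $\frac1m\sum_{d\mid(m,t)}\binom{m/d}{t/d}c_d(k)=|\mathcal{N}_{m,t,\langle k\rangle}|$. I would prove this by expanding Ramanujan's sum as $c_d(k)=\sum_{e\mid(d,k)}\mu(d/e)\,e$, interchanging the order of summation so that $e$ runs over divisors of $(k,t,m)$, substituting $d=ed'$, and recognizing the inner sum $\sum_{d'\mid(m/e,\,t/e)}\mu(d')\binom{(m/e)/d'}{(t/e)/d'}$ as $(m/e)\,\mathrm{Ap}(m/e,\,t/e)$; the factors of $m$ and $e$ then cancel and the master identity drops out. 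This is essentially Möbius inversion against the aperiodic counts and needs only routine bookkeeping.

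It remains to reconcile the genuine sign $(-1)^{t(d+1)/d}=(-1)^{t+t/d}$ in Theorem \ref{Thm:CmPartixForm1} with the all-positive master identity, and this is where the four cases and the main difficulty arise. When $t$ is odd, or when $v_2(m)<v_2(t)$, every divisor $d\mid(m,t)$ has $v_2(d)<v_2(t)$, so $t/d$ is even and the sign is $+1$ throughout; the formula is literally the master identity at $k=n$, giving case 1. When $v_2(m)\ge v_2(t)\ge1$, the sign is $-1$ exactly for the divisors with $v_2(d)=v_2(t)$, and I would absorb these flips using the multiplicativity of $c_d$ together with the prime-power values of Ramanujan's sum at the prime $2$. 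The trichotomy is governed by how $v_2(n)$ compares to $v_2(t)$: if $v_2(n)<v_2(t)-1$ then the $2$-part factor $c_{2^{v_2(t)}}(n)$ vanishes, so $c_d(n)=0$ on every sign-flipped divisor and the formula again coincides with the master identity at $k=n$ (case 2); if $v_2(n)=v_2(t)-1$ the relations $-c_d(n)=c_d(2n)$ on the flipped divisors and $c_d(n)=c_d(2n)$ on the rest convert the signed sum into the master identity at $k=2n$ (case 3); and if $v_2(n)\ge v_2(t)$ the analogous computation with $k=(t,n)/2$, whose $2$-adic valuation is $v_2(t)-1$, matches every term (case 4).

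The main obstacle is precisely this last bundle of Ramanujan-sum identities: verifying, divisor-by-divisor and separately on the odd and $2$-power parts, that the sign flip on the full-$2$-part divisors is exactly undone by the substitution $n\mapsto 2n$ or $n\mapsto(t,n)/2$, using that $c_{d'}$ depends only on $(\cdot,d')$ and that $d'\mid t$ forces $(n,d')=((t,n)/2,\,d')$ for odd $d'$. I would finish by checking the degenerate cases $t=0$ and $t=m$ and confirming consistency with the maximizer in Theorem \ref{Thm:PartixMax}, since the value of $k$ realizing the largest $(k,t,m)$ should reproduce the claimed argmax.
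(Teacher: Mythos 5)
Your proposal is correct and follows essentially the same route as the paper: both rest on the all-positive-sign identity $\left|\mathcal{N}_{m,t,\left<k\right>}\right|=\frac{1}{m}\sum_{d\mid\left(m,t\right)}\binom{m/d}{t/d}c_d\left(k\right)$ (the paper's Proposition \ref{Prop:NecklaceFreqFuncNewForm2}, your ``master identity''), followed by a case analysis on $v_2\left(n\right)$ versus $v_2\left(t\right)$ driven by the values $c_{2^v}\left(n\right)\in\{0,-2^{v-1},2^{v-1}\}$ and multiplicativity of Ramanujan's sum. The only cosmetic difference is that you absorb the sign flips term-by-term via the substitutions $n\mapsto 2n$ and $n\mapsto\left(t,n\right)/2$ inside the Ramanujan sums, whereas the paper subtracts the two formulas to isolate a correction term $\frac{c_{2^v}\left(n\right)}{2^{v-1}}\left|\mathcal{N}_{M,T,\left<N\right>}\right|$ and then manipulates necklace frequency sets; both verifications are equivalent bookkeeping.
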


The quantity $\left|\mathcal{N}_{m,t,\left< n\right>}\right|$ is the number of possible necklaces with $t$ black beads and $m-t$ white beads, such that the frequency of the necklace is a divisor of $n$.

\section{Generating Function Approach}\label{Sec:MainDerive}

All results shown in this paper are based on Theorem \ref{Thm:CmPartixForm1}. This section is dedicated to proving the theorem using a generating function approach.

\subsection{Some Definitions}\label{SubSec:MainDef}

It is useful to start with some definitions to allow the discussion below to proceed smoothly.

\begin{definition}
	\label{Def:IntPartixFunc}
	Let $m$, $t$, and $n$ be integers. The quantity $q_{m,t}\left(n\right)$ is defined recursively as:
	$$q_{m,t}\left(n\right)=q_{m-1,t-1}\left(n-t\right)+q_{m-1,t}\left(n-t\right)+\left[n=m=t=0\right]$$
	with the starting condition $q_{m,t}\left(n\right)=0$ whenever $m$, $t$, or $n$ is less than 0.
\end{definition}

As is well-known, this is simply the partition function of the positive integer $n$ with $t$ distinct parts such that no part is larger than $m$ when all the parameters are positive.

\begin{definition}
	\label{Def:ModPartixFunc}
	Let $m$ and $t$ be integers, $n$ be an integer modulo $m$, and $n'$ be the integer $0\leq n'<m$ from the equivalent class of integers represented by $n$. The quantity $Q_{m,t}^*(n)$ is defined as:
	$$Q_{m,t}^*\left(n\right)=\sum_{j}q_{m-1,t}\left(mj+n'\right)$$
\end{definition}

It is clear that, when $m$ and $t$ are positive, $Q_{m,t}^*\left(n\right)$ is simly the partition function of $n\bmod m$ with $t$ distinct parts such that no part is equal to the identity element $0\bmod m$. The last restriction is the only difference between $Q_{m,t}^*\left(n\right)$ and $Q_{m,t}\left(n\right)$.

\begin{proposition}
	\label{Prop:ModPartixConv}
	Let $m$ and $t$ be integers, and $n$ be an integer modulo $m$. Then $Q_{m,t}\left(n\right)=Q_{m,t-1}^*\left(n\right)+Q_{m,t}^*\left(n\right)$.
\end{proposition}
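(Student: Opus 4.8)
The plan is to prove the identity by a direct combinatorial decomposition, conditioning on whether the identity element $0\bmod m$ is used as a part. First I would fix the interpretations of the quantities involved. A partition of $n$ into $t$ distinct parts is a $t$-element subset $\{a_1,\ldots,a_t\}$ of the representatives $\{0,1,\ldots,m-1\}$ of $\mathbb{Z}/m\mathbb{Z}$ whose sum is congruent to $n$ modulo $m$, so $Q_{m,t}(n)$ counts exactly these subsets. The quantity $Q_{m,t}^*(n)$ counts the same subsets subject to the extra condition that $0$ is not among the parts; I would verify that this matches Definition~\ref{Def:ModPartixFunc} by observing that $q_{m-1,t}(N)$ counts the $t$-element subsets of $\{1,\ldots,m-1\}$ whose integer sum is exactly $N$, so summing $q_{m-1,t}(mj+n')$ over $j$ collects precisely those subsets whose integer sum is congruent to $n'\equiv n$ modulo $m$ (each admissible subset has a unique nonnegative integer sum $N=mj+n'$ for exactly one $j\geq 0$, and the terms with $N<0$ vanish by the starting condition).

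With these interpretations in place, the core step is to split the subsets counted by $Q_{m,t}(n)$ into two disjoint classes. Those $t$-subsets that do not contain $0$ are subsets of $\{1,\ldots,m-1\}$ and are counted directly by $Q_{m,t}^*(n)$. Those that do contain $0$ are in bijection with the $(t-1)$-subsets of $\{1,\ldots,m-1\}$ of the same sum, via the map that deletes the element $0$; this is a bijection because $0$ contributes nothing to the sum (as an integer and modulo $m$) and because distinctness forces $0$ to occur at most once, so deletion and reinsertion of $0$ are mutually inverse. Hence the subsets containing $0$ are counted by $Q_{m,t-1}^*(n)$, and adding the two classes gives $Q_{m,t}(n)=Q_{m,t-1}^*(n)+Q_{m,t}^*(n)$.

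The remaining work is to confirm that the decomposition survives the degenerate and boundary cases, which is where I expect the only real care to be needed. When $t=0$ the term $Q_{m,t-1}^*(n)=Q_{m,-1}^*(n)$ must vanish, consistent with the starting condition $q_{m,t}(n)=0$ for negative $t$, so the identity reduces to $Q_{m,0}(n)=Q_{m,0}^*(n)=[\,n\equiv 0\,]$; I would check this together with the extremes $t=m$ and $t>m$ directly. I would also note that the isolated term $[n=m=t=0]$ in the recursion of Definition~\ref{Def:IntPartixFunc} plays no role here, since it only fixes the normalisation $q_{0,0}(0)=1$ for the empty partition and is already absorbed into the subset counts. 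No genuinely hard estimate or algebraic manipulation arises: the content of the proposition is simply that adjoining the identity element $0$ to a partition leaves its modular sum unchanged while increasing the part count by one, and the main obstacle is nothing more than phrasing the deletion/insertion bijection cleanly enough that the boundary cases are manifestly covered.
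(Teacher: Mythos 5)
Your proof is correct and follows essentially the same route as the paper: the paper's own argument is precisely the decomposition into partitions containing $0\bmod m$ (in bijection with those counted by $Q_{m,t-1}^*\left(n\right)$ via deletion of the part $0$) and those not containing it (counted by $Q_{m,t}^*\left(n\right)$). Your additional verification of the interpretation of Definition~\ref{Def:ModPartixFunc} and of the boundary cases is sound but goes beyond what the paper records.
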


\begin{proof}
	This is because the partitions counted by $Q_{m,t}\left(n\right)$ can be separated into two ``clouds": those with $0\bmod m$ as a part and those without. The first and second clouds have obvious one-to-one relationships with the partitions counted by $Q_{m,t-1}^*\left(n\right)$ and $Q_{m,t}^*\left(n\right)$, respectively.
\end{proof}

This fact makes $Q_{m,t}^*\left(n\right)$ a highly useful intermediate quantity for the purpose of this work.

\begin{definition}
	The following two short-hands will be used throughout this section:
	
	$$\ddot{Q}_{m,t,s}\left(n\right)=\sum_{u\equiv t\bmod s}Q_{m,u}\left(n\right)$$
	
	$$\ddot{Q}_{m,t,s}^*\left(n\right)=\sum_{u\equiv t\bmod s}Q_{m,u}^*\left(n\right)$$
\end{definition}

The reason for this will be apparent shortly.

\subsection{The Polynomial $F_m\left(x,y,z\right)$}\label{SubSec:FPolynomial}

\begin{definition}
	\label{Def:FPolynomial}
	Let $m$ be a positive integer greater than 1, and $x$, $y$, and $z$ be complex numbers. Then the function $F_m\left(x,y,z\right)$ is given by the product:
	$$F_m(x,y,z)=\prod_{j=1}^{m-1}\left(x+zy^j\right)$$
\end{definition}

It will be useful to expand the allowed values of $m$ to all non-negative integers with the conventions $F_0\left(x,y,z\right)=0$ and $F_1\left(x,y,z\right)=1$.

The function $F_m\left(x,y,z\right)$ is often used when studying integer partitions. When $x=1$ and $m$ indefinitely large, it is the same as the generation function Euler used in his well-known correspondence with Philip Naud\'e. Thus the identity below requires no further explanation.

\begin{proposition}
	\label{Prop:FGenIntPartix}
	The polynomial $F_m(1,y,z)$ is the generating function for $q_{m-1,t}\left(n\right)$, in the sense that:
	$$F_m\left(1,y,z\right)=\sum_{n,t}q_{m-1,t}\left(n\right)y^nz^t$$
\end{proposition}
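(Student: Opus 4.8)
The plan is to prove the identity by induction on $m$, driving the induction entirely from the recurrence that defines $q_{m,t}(n)$ in Definition~\ref{Def:IntPartixFunc}. Writing $P_M(y,z)=\sum_{n,t}q_{M,t}(n)\,y^nz^t$, the assertion is equivalent to $P_{m-1}(y,z)=F_m(1,y,z)=\prod_{j=1}^{m-1}(1+zy^j)$, i.e.\ to the clean statement $P_M(y,z)=\prod_{j=1}^{M}(1+zy^j)$ for every $M\geq 0$. The conceptual content is that each factor $1+zy^j$ records the binary choice of whether the distinct part $j$ is used: selecting the term $zy^j$ contributes one part (weight $z$) of size $j$ (weight $y^j$), so the coefficient of $y^nz^t$ counts the subsets of $\{1,\dots,M\}$ of size $t$ with sum $n$, which is exactly $q_{M,t}(n)$. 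To keep the argument honest — relying on the recursive definition rather than on the informal combinatorial reading quoted after Definition~\ref{Def:IntPartixFunc} — I will turn this intuition into a functional equation.

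First I would translate the recurrence $q_{M,t}(n)=q_{M-1,t-1}(n-t)+q_{M-1,t}(n-t)+[n=M=t=0]$ into a relation for $P_M$. The decisive manoeuvre is that the shift $n\mapsto n-t$ (subtracting $1$ from each of the $t$ parts) becomes the substitution $z\mapsto yz$ at the level of generating functions. Re-indexing each of the two sums by $b=n-t$ together with $a=t-1$ (first sum) or $a=t$ (second sum) collects exactly the factors $y^{a}z^{a}=(yz)^a$, and one is left with
$$P_M(y,z)=(1+yz)\,P_{M-1}(y,yz)\qquad (M\geq 1),$$
while the base conditions and the indicator term give $P_0(y,z)=1$ and $P_{-1}\equiv 0$.

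It then remains to verify that the product $\Pi_M(y,z)=\prod_{j=1}^{M}(1+zy^j)$ satisfies the same recurrence and the same initial value. Here $\Pi_{M-1}(y,yz)=\prod_{j=1}^{M-1}(1+zy^{j+1})=\prod_{k=2}^{M}(1+zy^k)$, so multiplying by the one missing factor $(1+zy^1)=(1+yz)$ rebuilds $\Pi_M$ exactly, and $\Pi_0$ is the empty product $1$. Since $P_M$ and $\Pi_M$ obey the same recurrence from the same starting value, induction forces $P_M=\Pi_M$, which is the claim; the boundary cases $m=0$ and $m=1$ match the stated conventions $F_0=0$ and $F_1=1$ ($P_{-1}=0$ and $P_0=1$).

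I expect the main obstacle to be purely bookkeeping rather than conceptual: getting both index substitutions and the resulting $(yz)^a$ factor precisely right, and checking that the conventions interlock (the vanishing of $q$ at negative first argument, the indicator term $[n=M=t=0]$, and the declared values $F_0=0$, $F_1=1$) so that the induction actually starts at $P_0=1$. No single step is deep, but the substitution $z\mapsto yz$ is the one place where a careless sign or shift would silently destroy the match with the product, so that is where I would be most careful.
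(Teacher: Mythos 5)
Your proof is correct, but it takes a genuinely different route from the paper: the paper offers no argument at all for this proposition. It simply remarks that $F_m(1,y,z)$ is Euler's classical generating function for partitions into distinct parts and declares that the identity ``requires no further explanation.'' That appeal is economical, but it quietly presupposes the identification of the recursively defined $q_{m-1,t}(n)$ of Definition~\ref{Def:IntPartixFunc} with the combinatorial counting function, which the paper also only asserts informally. Your induction closes exactly that gap: by summing the defining recurrence $q_{M,t}(n)=q_{M-1,t-1}(n-t)+q_{M-1,t}(n-t)+[n=M=t=0]$ against $y^nz^t$, you correctly obtain the functional equation $P_M(y,z)=(1+yz)P_{M-1}(y,yz)$ for $M\geq 1$ (the shift $n\mapsto n-t$ indeed becoming $z\mapsto yz$, with the first sum contributing $yz\,P_{M-1}(y,yz)$ and the second $P_{M-1}(y,yz)$), verify that $\prod_{j=1}^{M}(1+zy^j)$ satisfies the same recurrence with the same initial value $P_0=1$, and even check the degenerate conventions $F_0=0$ and $F_1=1$ against $P_{-1}\equiv 0$ and $P_0=1$. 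The re-indexings $(a,b)=(t-1,n-t)$ and $(a,b)=(t,n-t)$ are handled correctly, with the vanishing of $q$ at negative arguments justifying the range of summation. So what the paper buys with brevity and reliance on a standard fact, you buy back as a self-contained verification tied to the paper's actual (recursive) definition — a strictly more rigorous treatment, at the cost of bookkeeping.
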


A more relevant fact for the purpose of this paper comes about from a slight modification of the statement above.

\begin{proposition}
	\label{Prop:FJDFTPair}
	Let $m$, $t$, $s$, and $u$ be integers, $\lambda_m=e^{2\pi i/m}$, and the quantity $J_{m,t,s}\left(u\right)$ be defined as:
	$$J_{m,t,s}\left(u\right)=\sum_{n\in\mathbb{Z}/m\mathbb{Z}}\ddot{Q}_{m,t,s}^*\left(n\right)\lambda_m^{un}$$
	
	Then $J_{m,t,s}\left(u\right)$ as a function of $t$ is the discrete Fourier transform of $F_m\left(1,\lambda_m^u,\lambda_s^v\right)/s$ as a function of $v$. Specifically:
	
	$$F_m\left(1,\lambda_m^u,\lambda_s^v\right)=\sum_{t=0}^{s-1}J_{m,t,s}\left(u\right)\lambda_s^{vt}$$
\end{proposition}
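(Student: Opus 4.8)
The plan is to obtain the identity by directly substituting roots of unity into the generating function of Proposition \ref{Prop:FGenIntPartix} and then regrouping the resulting finite sum according to the residue classes that the two exponentials cannot distinguish. Concretely, I would begin from $F_m(1,y,z)=\sum_{n,t}q_{m-1,t}(n)y^nz^t$, which is a genuine polynomial of bounded degree (so every sum below is finite and may be rearranged freely), and set $y=\lambda_m^u$ and $z=\lambda_s^v$ to get $F_m(1,\lambda_m^u,\lambda_s^v)=\sum_{n,t}q_{m-1,t}(n)\lambda_m^{un}\lambda_s^{vt}$.

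The first key step is to collapse the sum over $n$. Since $\lambda_m^m=1$, the factor $\lambda_m^{un}$ depends on $n$ only through its residue $n'$ with $0\le n'<m$; writing $n=mj+n'$ and performing the $j$-summation first gives $\lambda_m^{un}=\lambda_m^{un'}$ and $\sum_j q_{m-1,t}(mj+n')=Q_{m,t}^*(n')$ by Definition \ref{Def:ModPartixFunc}. Hence the $n$-sum becomes a sum over $n'\in\mathbb{Z}/m\mathbb{Z}$ of $Q_{m,t}^*(n')\lambda_m^{un'}$.

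The second key step is to collapse the sum over $t$ in the same fashion. Because $\lambda_s^s=1$, the factor $\lambda_s^{vt}$ depends only on $t\bmod s$, so I would group the $t$-sum by residue class $\tau$ with $0\le\tau<s$; the inner sum $\sum_{t\equiv\tau\bmod s}Q_{m,t}^*(n')$ is exactly $\ddot{Q}_{m,\tau,s}^*(n')$ by definition. Interchanging the order of summation to carry out the $n'$-sum for each fixed $\tau$ then recognizes $\sum_{n'\in\mathbb{Z}/m\mathbb{Z}}\ddot{Q}_{m,\tau,s}^*(n')\lambda_m^{un'}$ as $J_{m,\tau,s}(u)$, and renaming $\tau$ back to $t$ yields the claimed formula.

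The argument is essentially bookkeeping, and the only point requiring care — the main (mild) obstacle — is verifying that the two regroupings are legitimate and align precisely with the definitions: that the $j$-summation packaged into $Q_{m,t}^*$ captures all integers in a fixed residue class mod $m$ (using $q_{m-1,t}(n)=0$ for $n<0$ to make the range finite), and that the residue-class bookkeeping for $t$ mod $s$ matches $\ddot{Q}_{m,t,s}^*$. Since $F_m$ is a polynomial of bounded degree in both $y$ and $z$, there are no convergence issues and the interchange of the two finite summations is automatic, so no analytic justification is needed. The discrete Fourier transform interpretation stated in the proposition is then simply a reading of this identity as the inverse transform in the variable $v$.
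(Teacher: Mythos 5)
Your proposal is correct and follows the same route the paper intends: the paper's proof is just the remark that the identity "follows quite naturally from the definitions," and your argument is precisely the spelled-out version — substitute $y=\lambda_m^u$, $z=\lambda_s^v$ into Proposition \ref{Prop:FGenIntPartix}, collapse the $n$-sum by residues mod $m$ to get $Q_{m,t}^*$, collapse the $t$-sum by residues mod $s$ to get $\ddot{Q}_{m,t,s}^*$, and recognize $J_{m,t,s}(u)$. The finiteness observation (via vanishing of $q_{m-1,t}$ outside a bounded range) correctly disposes of the only point needing care.
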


This follows quite naturally from the definitions of $F_m\left(1,y,z\right)$ and $\ddot{Q}_{m,t,s}^*\left(n\right)$.

Note that $\ddot{Q}_{m,t,s}^*\left(n\right)$ itself as a function of $n$ is the discrete Fourier transform of $J_{m,t,s}\left(u\right)/m$ as a function of $u$. Thus one can obtain an expression for $\ddot{Q}_{m,t,s}^*\left(n\right)$ from an expression for $F_m\left(1,\lambda_m^u,\lambda_s^v\right)$. In the next sub-section, it will be shown that a useful form for $F_m\left(1,\lambda_m^u,\lambda_s^v\right)$ can be derived. A clear line of attack towards proving Theorem \ref{Thm:CmPartixForm1} will then be formed.

\subsection{Evaluating $F_m\left(1,\lambda_m^u,z\right)$}\label{SubSec:FPolynomialEval}

The following lemma will be very useful shortly.

\begin{proposition}
	\label{Prop:DivisorEndo}
	Let $m$ and $u$ be positive integers, and $\mathcal{D}_m$ be the set of divisors of $m$. Also define the function $H_{m,u}:\mathcal{D}_m\rightarrow\mathcal{D}_m$ as $H_{m,u}\left(d\right)=d/\left(u,d\right)$. Then the following statements are true:
	\begin{enumerate}
		\setlength\itemsep{0em}
		\item The image of $H_{m,u}$ is the set of divisors of $m/\left(u,m\right)$.
		\item The preimage of $d'$, denoted as $H_{m,u}^{-1}\left(d'\right)$, is the set $\{d\in\mathcal{D}_m:d=d_\perp u_\parallel d',\forall d_\perp\mid\left(m,u_\perp\right)\}$. The quantity $u_\perp$ is the product of all prime factors of $u$ coprime with $d'$, and $u_\parallel=u/u_\perp$.
	\end{enumerate}
\end{proposition}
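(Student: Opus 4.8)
The plan is to reduce the entire statement to a prime-by-prime (that is, $p$-adic valuation) computation, exploiting that the divisor lattice $\mathcal{D}_m$ factors as a product over the primes dividing $m$ and that $H_{m,u}$ respects this factorization. Writing $v_p$ for the $p$-adic valuation, each $d\in\mathcal{D}_m$ is encoded by the tuple $\left(v_p\!\left(d\right)\right)_{p\mid m}$ with $0\leq v_p\!\left(d\right)\leq v_p\!\left(m\right)$, and because gcd and division are computed coordinate-wise, the first step is to record that $H_{m,u}$ acts coordinate-wise. Fixing a prime $p$ and abbreviating $a=v_p\!\left(m\right)$, $b=v_p\!\left(u\right)$, $c=v_p\!\left(d\right)$, one has
$$v_p\!\left(H_{m,u}\!\left(d\right)\right)=v_p\!\left(d\right)-\min\!\left(v_p\!\left(u\right),v_p\!\left(d\right)\right)=\max\!\left(c-b,0\right).$$
This one identity drives both parts of the proposition.

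For the first statement, I would note that as $c$ runs over $\left\{0,1,\ldots,a\right\}$, the value $\max\!\left(c-b,0\right)$ runs over exactly $\left\{0,1,\ldots,\max\!\left(a-b,0\right)\right\}$. Since $v_p\!\left(m/\left(u,m\right)\right)=a-\min\!\left(a,b\right)=\max\!\left(a-b,0\right)$, the valuations attainable at $p$ by the image are precisely those of a divisor of $m/\left(u,m\right)$. Taking the product over all primes dividing $m$ then identifies the image of $H_{m,u}$ with $\mathcal{D}_{m/\left(u,m\right)}$, proving part 1.

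For the second statement, I would compute the fiber over a fixed $d'\in\mathcal{D}_{m/\left(u,m\right)}$ one prime at a time by solving $\max\!\left(c-b,0\right)=c'$ for $c$, where $c'=v_p\!\left(d'\right)$. There are two regimes. If $c'>0$ the equation forces the unique solution $c=b+c'$, and this obeys $c\leq a$ because $c'\leq\max\!\left(a-b,0\right)$ here forces $a\geq b$ and hence $b+c'\leq a$. If $c'=0$, every $c\in\left\{0,1,\ldots,\min\!\left(a,b\right)\right\}$ is a solution. I would then match these regimes to the decomposition in the statement: the primes with $c'>0$ are exactly those dividing $d'$, and under $u=u_\perp u_\parallel$ with $u_\parallel$ the largest divisor of $u$ supported on the primes dividing $d'$ (equivalently $u_\perp$ the coprime-to-$d'$ part of $u$, so that $v_p\!\left(u_\parallel\right)=b$ when $p\mid d'$ and $v_p\!\left(u_\perp\right)=b$ when $p\nmid d'$), the forced exponent is $b+c'=v_p\!\left(u_\parallel\right)+v_p\!\left(d'\right)$. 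At the remaining primes $c'=0$, so $d'$ and $u_\parallel$ contribute nothing while the free range $\left\{0,\ldots,\min\!\left(a,b\right)\right\}=\left\{0,\ldots,v_p\!\left(\left(m,u_\perp\right)\right)\right\}$ is precisely the set of possible $v_p\!\left(d_\perp\right)$ for $d_\perp\mid\left(m,u_\perp\right)$. Reassembling these local choices through the divisor-lattice product gives $d=d_\perp u_\parallel d'$ as $d_\perp$ ranges over $\mathcal{D}_{\left(m,u_\perp\right)}$, which is the claimed description of $H_{m,u}^{-1}\!\left(d'\right)$.

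The main obstacle is the bookkeeping in part 2, namely verifying that the decomposition $u=u_\perp u_\parallel$ is exactly the one separating the ``forced'' primes dividing $d'$ from the ``free'' primes, and that $u_\parallel$ supplies precisely the fixed exponent $b$ at each prime dividing $d'$ while contributing nothing at the free primes. One must confirm both directions of the correspondence: that $d_\perp u_\parallel d'$ lands in $\mathcal{D}_m$ for every $d_\perp\mid\left(m,u_\perp\right)$, and that every preimage arises this way with $d_\perp=d/\left(u_\parallel d'\right)$ uniquely determined. I would be careful with the boundary case $b=0$ (a prime dividing $m$ but not $u$): then $\min\!\left(a,b\right)=0$, the $p$-part of $\left(m,u_\perp\right)$ is trivial, and $c=c'$ is forced, consistent with the formula. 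Once the prime-local correspondence is pinned down, the global claim is simply the statement that $\mathcal{D}_m$ is the product of the local divisor lattices, which needs no further computation.
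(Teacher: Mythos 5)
Your proposal is correct and takes essentially the same route as the paper: both proofs reduce everything to prime-by-prime exponent analysis, with your local identity $v_p\left(H_{m,u}\left(d\right)\right)=\max\left(v_p\left(d\right)-v_p\left(u\right),0\right)$ being precisely the paper's manipulation $D_j-\min\left(D_j,U_j\right)$, and the same decomposition $d=d_\perp u_\parallel d'$ with $u=u_\perp u_\parallel$ organizing the fiber computation in part 2. If anything, your coordinate-wise treatment of part 1 is tighter than the paper's (which argues via downward closure of the image plus a maximality remark), but the underlying mathematics is identical.
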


\begin{proof}
	Consider the prime factorization of $d=p_1^{D_1}\ldots p_\omega^{D_\omega}$, where $\{p_1,\ldots,p_\omega\}$ is the set of distinct prime factors and $D_1,\ldots,D_\omega$ are non-zero. Then $u$ can be written as $u=u_*p_1^{U_1}\ldots p_\omega^{U_\omega}$, where $u_*$ is the product of all prime factors of $u$ coprime with $d$, and $U_1,\ldots,U_\omega$ are either zero or positive. Thus one can write:
	$$d'=d/\left(u,d\right)=p_1^{D_1-\min\left(D_1,U_1\right)}\ldots p_\omega^{D_\omega-\min\left(D_\omega,U_\omega\right)}$$

	One can conclude from this that if $d'$ is in the image of $H_{m,u}$, then any divisor of $d'$ is also in the image of $H_{m,u}$, as one can easily reduce the exponent associated with $p_j$ for any $j$ by dividing $d$ by $p_j$ when appropriate. Since $H_{m,u}\left(m\right)=m/\left(u,m\right)$, this value and its divisors are clearly all within the image of $H_{m,u}$. The fact that $m$ already contains the most number of prime factors possible suggests that the first statement of this proposition must be true.
	
	To show the second statement, notice that if $d=d_\perp u_\parallel d'$ as stated, then $\left(u,d\right)=\left(u_\perp,d_\perp\right)\left(u_\parallel,u_\parallel d'\right)=d_\perp u_\parallel$, and so $d/\left(u,d\right)=d'$ for all valid values of $d$. The claim clearly appears to be plausible, but one still needs to show that no other value of $d$ maps to $d'$ through the function $H_{m,u}$.
	
	To this end, let the prime factorization of $d$ be rewritten as $d=d_\perp p_1^{D_1}\ldots p_\omega^{D_\omega}$, where $D_1,\ldots,D_\omega$ are non-zero and $d_\perp$ is the product of all prime factors coprime to $d'$. Also let $d'=p_1^{D'_1}\ldots p_\omega^{D'_\omega}$, where $D'_1,\ldots,D'_\omega$ are required to be non-zero. Finally, let $u=u_\perp p_1^{U_1}\ldots p_\omega^{U_\omega}$, where $U_1,\ldots,U_\omega$ can be zero or positive.
	
	The fact that $d'=d/\left(u,d\right)$ means $D'_j=D_j-\min\left(D_j,U_j\right)>0$ for all $1\leq j\leq\omega$. But if $U_j\geq D_j$, then $D'_j=0$ which conflicts with the requirement that $D'_j>0$. Thus $D_j>U_j$, and consequently $D'_j+U_j=D_j$ for all $1\leq j\leq\omega$. This means $u_\parallel d'$ must divide $d$. In fact, $d/\left(u_\parallel d'\right)=d_\perp$ can only contain prime factors coprime to $d'$.
	
	At this point, it remains to show that $d_\perp$ can only take on values that are divisors of $\left(m,u_\perp\right)$. If this is not true, then $\left(u,d\right)=\left(u_\perp,d_\perp\right)u_\parallel$, where $\left(u_\perp,d_\perp\right)\neq d_\perp$, and so $d/\left(u,d\right)=\left(d_\perp/\left(u_\perp,d_\perp\right)\right)d'$, which is not equal to $d'$.
\end{proof}

One now has all the tools needed to consider the main result of this sub-section.

\begin{proposition}
	\label{Prop:FNewForm}
	Let $m$ and $u$ be positive integers, $\alpha=m/\left(m,u\right)$, and $\beta=\left(m,u\right)$, then:
	$$F_m\left(1,\lambda_m^u,z\right)=\frac{(1-\left(-z\right)^\alpha)^\beta}{1+z}$$
\end{proposition}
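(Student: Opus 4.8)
The plan is to recognize $\lambda_m^u$ as a root of unity of small order and thereby collapse the product in Definition \ref{Def:FPolynomial} into a cyclotomic factorization. Writing $\zeta=\lambda_m^u=e^{2\pi i u/m}$, its multiplicative order is exactly $m/(m,u)=\alpha$, so $\zeta$ is a primitive $\alpha$-th root of unity. The first step is to pass from the product over $j=1,\ldots,m-1$ appearing in $F_m(1,\lambda_m^u,z)$ to the full product over $j=0,\ldots,m-1$; the extra $j=0$ factor is simply $1+z\zeta^0=1+z$, which is precisely the denominator in the target formula. Since $(1+z)$ literally occurs as the $j=0$ factor of the full product, it divides it exactly, so it will suffice to evaluate $\prod_{j=0}^{m-1}(1+z\zeta^j)$ and divide by $1+z$.

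The crux is a counting argument for the full product. Because $\zeta$ has order $\alpha$, the sequence $\zeta^0,\zeta^1,\ldots$ is periodic in $j$ with period $\alpha$, and as $j$ runs over the complete residue system $\{0,1,\ldots,m-1\}$ with $m=\alpha\beta$, each of the $\alpha$ distinct $\alpha$-th roots of unity $\lambda_\alpha^k$ (with $\lambda_\alpha=e^{2\pi i/\alpha}$) is attained exactly $\beta$ times. This is the step I would be most careful about, since it is where the definitions $\alpha=m/(m,u)$ and $\beta=(m,u)$ genuinely enter; it rests only on $\langle\zeta\rangle$ having order $\alpha$ together with $m=\alpha\beta$. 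Granting this, the product factors as
$$\prod_{j=0}^{m-1}\left(1+z\zeta^j\right)=\left(\prod_{k=0}^{\alpha-1}\left(1+z\lambda_\alpha^k\right)\right)^\beta.$$

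It then remains to identify the inner product. Here I would invoke the standard factorization $\prod_{k=0}^{\alpha-1}(X-\lambda_\alpha^k)=X^\alpha-1$. Pulling $z$ out of each factor gives $\prod_k(1+z\lambda_\alpha^k)=z^\alpha\prod_k(\lambda_\alpha^k+z^{-1})$, and evaluating the standard identity at $X=-z^{-1}$ collapses this to $1-(-z)^\alpha$ after simplification; one should verify the sign bookkeeping, noting that $(-z)^\alpha=(-1)^\alpha z^\alpha$ and that the two stray factors of $(-1)^\alpha$ cancel. A quick sanity check at $\alpha=1$ gives $1+z$ and at $\alpha=2$ gives $(1+z)(1-z)=1-z^2$, both matching $1-(-z)^\alpha$. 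Substituting back yields $\prod_{j=0}^{m-1}(1+z\zeta^j)=(1-(-z)^\alpha)^\beta$, and dividing out the $j=0$ factor $1+z$ produces exactly the claimed formula. The only genuine subtlety is the multiplicity count in the middle paragraph; the cyclotomic identity and the surrounding algebra are routine.
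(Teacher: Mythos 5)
Your proof is correct, and it takes a genuinely different and more elementary route than the paper's. You observe that $\zeta=\lambda_m^u$ is a primitive $\alpha$-th root of unity, so as $j$ runs over a complete residue system modulo $m=\alpha\beta$ the powers $\zeta^j$ sweep out the full set of $\alpha$-th roots of unity exactly $\beta$ times; the identity $\prod_{k=0}^{\alpha-1}(X-\lambda_\alpha^k)=X^\alpha-1$ evaluated at $X=-z^{-1}$ then finishes the computation, and your sign bookkeeping ($(-z)^\alpha(-z^{-1})^\alpha=1$) is right. The paper instead stratifies the index set $\{1,\dots,m-1\}$ by the exact order $d$ of $\lambda_m^j$, writes each stratum as a power of a cyclotomic polynomial $\Phi_{d/(u,d)}(-z^{-1})$, and must then understand the fibers of the divisor map $d\mapsto d/(u,d)$ (its Proposition \ref{Prop:DivisorEndo}) together with a totient computation showing each exponent equals $(m,u)$, before reassembling everything via $\prod_{d\mid m}\Phi_d(z)=z^m-1$. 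In effect the paper factors the product into irreducible cyclotomic pieces and then multiplies them back together, whereas you read off the multiset of roots directly from the periodicity of $j\mapsto\zeta^j$. Your argument is shorter, is self-contained, and renders Proposition \ref{Prop:DivisorEndo} unnecessary (the paper proves it essentially only to serve this proposition); the paper's route makes the cyclotomic fine structure explicit, but that structure is immediately collapsed again and buys nothing extra here.
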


\begin{proof}
	Let $\left(\mathbb{Z}/m\mathbb{Z}\right)^\times$ be the set of positive integers less than and coprime with $m$, then Definition \ref{Def:FPolynomial} can be rewritten as:
	$$F_m\left(1,\lambda_m^u,z\right)=\left(-z\right)^{m-1}\prod_{d\mid m,d\neq 1}\prod_{j\in\left(\mathbb{Z}/d\mathbb{Z}\right)^\times}(-z^{-1}-\lambda_{D}^{Uj)})$$
	where $U=u/\left(u,d\right)$ and $D=d/\left(u,d\right)$.
	
	Let $\{j_1,\ldots,j_{\varphi\left(d\right)}\}=\left(\mathbb{Z}/d\mathbb{Z}\right)^\times$ and $\{j'_1,\dots,j'_{\varphi\left(D\right)}\}=\left(\mathbb{Z}/D\mathbb{Z}\right)^\times$, where $\varphi$ is the Euler totient function. Using elementary facts regarding the group $\left(\mathbb{Z}/m\mathbb{Z}\right)^\times$, one knows that the ordered list $\left(Uj_1,\ldots,Uj_{\varphi\left(d\right)}\right)$, when each entry is interpreted as a member of $\left(\mathbb{Z}/D\mathbb{Z}\right)^\times$, contains $\varphi\left(d\right)/\varphi\left(D\right)$ copies each of $j'_1$, ..., and $j'_{\varphi\left(D\right)}$. It follows that:
	$$F_m\left(1,\lambda_m^u,z\right)=\frac{\left(-z\right)^m}{1+z}\prod_{d\mid m}\left(\Phi_{d/\left(u,d\right)}\left(-z^{-1}\right)\right)^{\varphi\left(d\right)/\varphi\left(d/\left(u,d\right)\right)}$$
	where $\Phi_d\left(z\right)$ is the $d$~\textsuperscript{th} cyclotomic polynomial.
	
	It is possible for multiple factors of this product to have the same value of $d/\left(u,d\right)$. As an example, every divisor of $\left(m,u\right)$ is also a divisor of $m$. Thus $\left(u,d\right)=d$ for all values of $d$ that are divisors of $\left(m,u\right)$, such that $d/\left(u,d\right)$ is always equal to 1 in this case.
	
	This is where Proposition \ref{Prop:DivisorEndo} becomes useful. It is now clear that $d'=d/\left(u,d\right)$ can only take on values that are divisors of $m/\left(u,m\right)$, and the values of $d$ that map to a single fixed value of $d'$ are in the form $d=d_\perp u_\parallel d'$, where $d_\perp$ is any divisor of $\left(m,u_\perp\right)$. Thus the equation above can be rewritten as:
	$$F_m\left(1,\lambda_m^u,z\right)=\frac{\left(-z\right)^m}{1+z}\prod_{d'\mid\left(m/\left(m,u\right)\right)}\left(\Phi_{d'}\left(-z^{-1}\right)\right)^{X_{d'}}$$
	$$X_{d'}=\frac{1}{\varphi\left(d'\right)}\sum_{d_\perp\mid\left(m,u_\perp\right)}\varphi\left(d_\perp u_\parallel d'\right)$$
	
	With a small amount of effort, one can show with standard properties of the Euler totient function that $X_{d'}$ simply evaluates to $X_{d'}=\left(m,u\right)$. Thus:
	$$F_m\left(1,\lambda_m^u,z\right)=\frac{(-z)^m}{1+z}\prod_{d'\mid\left(m/\left(m,u\right)\right)}\left(\Phi_{d'}\left(-z^{-1}\right)\right)^{\left(m,u\right)}$$
	Applying the well-known identity $\prod_{d\mid m}\Phi_d\left(z\right)=z^m-1$ immediately results in the desired form.
\end{proof}

A corollary of this result is the following.

\begin{proposition}
	\label{Prop:FNewForm2}
	Let $m$ and $u$ be positive integers, $\alpha=m/\left(m,u\right)$, and $\beta=\left(m,u\right)$, then:
	$$F_m\left(1,\lambda_m^u,z\right)=\sum_{j}\binom{\beta-1}{\left\lfloor{j/\alpha}\right\rfloor}\left(-1\right)^{\left\lfloor{j/\alpha}\right\rfloor+j}z^j$$
\end{proposition}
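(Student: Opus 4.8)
The plan is to derive the series directly from the closed form established in Proposition~\ref{Prop:FNewForm}, namely $F_m(1,\lambda_m^u,z) = (1-(-z)^\alpha)^\beta/(1+z)$, by expanding the two factors separately as formal power series and forming their Cauchy product.

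First I would expand the numerator with the binomial theorem: $(1-(-z)^\alpha)^\beta = \sum_{k=0}^\beta \binom{\beta}{k}(-1)^{k}(-z)^{\alpha k} = \sum_k \binom{\beta}{k}(-1)^{k(1+\alpha)} z^{\alpha k}$, a polynomial supported on multiples of $\alpha$. Next I would expand the reciprocal as the geometric series $1/(1+z) = \sum_{i\geq 0}(-1)^i z^i$. Multiplying the two and collecting the coefficient of $z^j$ forces $i = j - \alpha k$ with $0 \leq \alpha k \leq j$, and the signs $(-1)^{k(1+\alpha)}$ and $(-1)^{j-\alpha k}$ combine to $(-1)^{k+j}$, giving $[z^j]F_m = (-1)^j \sum_{k=0}^{\lfloor j/\alpha\rfloor} (-1)^k \binom{\beta}{k}$.

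The one genuine step is then to evaluate this partial alternating binomial sum in closed form. I would invoke (or prove in one line) the standard identity $\sum_{k=0}^{K}(-1)^k\binom{\beta}{k} = (-1)^K\binom{\beta-1}{K}$, which follows by telescoping after substituting Pascal's rule $\binom{\beta}{k}=\binom{\beta-1}{k}+\binom{\beta-1}{k-1}$. Applying it with $K=\lfloor j/\alpha\rfloor$ turns the coefficient into $(-1)^j(-1)^{\lfloor j/\alpha\rfloor}\binom{\beta-1}{\lfloor j/\alpha\rfloor}$, which is exactly the summand claimed in the proposition.

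I do not anticipate a serious obstacle here: the result is routine bookkeeping once Proposition~\ref{Prop:FNewForm} is in hand, and the only places care is needed are tracking the parity of the signs through the Cauchy product and recognizing the partial alternating sum. The mild subtlety worth flagging is that $1/(1+z)$ is only a power series, not a polynomial, so I would note explicitly that divisibility of the numerator by $1+z$ — verified by substituting $z=-1$, which gives $1-(-(-1))^\alpha = 1-1 = 0$ — guarantees the formal manipulation yields the correct polynomial coefficients with no convergence or truncation concerns.
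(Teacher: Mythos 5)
Your proof is correct, but it organizes the expansion differently from the paper. The paper splits the closed form as $\frac{1-(-z)^\alpha}{1+z}\cdot\left(1-(-z)^\alpha\right)^{\beta-1}$: the first factor is the finite geometric polynomial $1+(-z)+\cdots+(-z)^{\alpha-1}$ and the second is a binomial expansion in powers of $z^\alpha$ with exponent $\beta-1$. With that grouping, each power $z^j$ arises from exactly one pair of terms (via the unique decomposition $j=\alpha\lfloor j/\alpha\rfloor+r$, $0\leq r<\alpha$), so the coefficient $(-1)^{\lfloor j/\alpha\rfloor+j}\binom{\beta-1}{\lfloor j/\alpha\rfloor}$ is read off directly with no summation identity needed. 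You instead expand the full numerator $\left(1-(-z)^\alpha\right)^\beta$ against the infinite series $1/(1+z)$, which leaves a partial alternating sum $\sum_{k=0}^{\lfloor j/\alpha\rfloor}(-1)^k\binom{\beta}{k}$ in each coefficient, and you then need the standard identity $\sum_{k=0}^{K}(-1)^k\binom{\beta}{k}=(-1)^K\binom{\beta-1}{K}$ to collapse it. Both routes are sound; the paper's grouping buys a one-step read-off of the coefficients (and, being a product of two polynomials, sidesteps any formal-power-series concerns), while yours is the more mechanical Cauchy-product computation at the cost of invoking one extra lemma --- whose telescoping proof you correctly sketch --- and your explicit check that $1+z$ divides the numerator is a point the paper glosses over.
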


\begin{proof}
	The quantity $F_m\left(1,\lambda_m^u,z\right)=\left(1-\left(-z\right)^\alpha\right)^\beta/\left(1+z\right)$, derived just above, is a polynomial as a function of $z$. This is because it can be written as the product of the factors $\left(1-\left(-z\right)^\alpha\right)/\left(1+z\right)$ and $\left(1-\left(-z\right)^\alpha\right)^{\beta-1}$. The first factor is reminiscent of the geometric sum and can be written as $1+\left(-z\right)+\left(-z\right)^2+\ldots+\left(-z\right)^{\alpha-1}$. The second factor can be expanded using the binomial theorem, resulting in the form $\binom{\beta-1}{0}1-\binom{\beta-1}{1}\left(-z\right)^\alpha+\binom{\beta-1}{2}\left(-z\right)^{2\alpha}+\ldots+\left(-1\right)^{\beta-1}\binom{\beta-1}{\beta-1}\left(-z\right)^{\alpha\left(\beta-1\right)}$. It is easy to see that multiplying these two factors together gives the desired form.
\end{proof}

\subsection{Formula for $J_{m,t,s}\left(u\right)$}\label{SubSec:JFunction}

The result from Proposition \ref{Prop:FNewForm2} allows one to generate new identities for $J_{m,t,s}\left(u\right)$, as it is related to $F_m\left(1,\lambda_m^u,\lambda_s^t\right)$ through the discrete Fourier transform, which can be inverted.

\begin{proposition}
	\label{Prop:JNewForm}
	Let $s$, $m$, and $u$ be positive integers, $t$ be an integer, $\alpha=m/\left(m,u\right)$, and $\beta=\left(m,u\right)$, then the function $J_{m,t,s}\left(u\right)$ can be written as $J_{m,t,s}\left(u\right)=\Upsilon_{s,t}\left(\alpha,\beta\right)$, where:
	$$\Upsilon_{s,t}\left(\alpha,\beta\right)=\sum_{j\equiv t\bmod s}\left(-1\right)^{\left\lfloor{j/\alpha}\right\rfloor+j}\binom{\beta-1}{\left\lfloor{j/\alpha}\right\rfloor}$$
\end{proposition}

\begin{proof}
	It was already observed that $\left(1/s\right)F_m\left(1,\lambda_m^u,\lambda_s^t\right)$ is the inverse discrete Fourier transform of $J_{m,t,s}\left(u\right)$. Thus one can use the result from Proposition \ref{Prop:FNewForm2} to write:
	$$\frac{1}{s}\sum_{v=0}^{s-1}F_m\left(1,\lambda_m^u,\lambda_s^v\right)\lambda_s^{-vt}=\frac{1}{s}\sum_{j=0}^{m-1}\binom{\beta-1}{\left\lfloor{j/\alpha}\right\rfloor}\left(-1\right)^{\left\lfloor{j/\alpha}\right\rfloor+j}\sum_{v=0}^{s-1}\lambda_s^{v\left(j-t\right)}$$
	But the inner summation is equal to zero unless $s$ divides $j-t$, in which case it is equal to $s$. Thus the right hand side above simplifies to the desired form.
\end{proof}

\subsection{Formula for $\ddot{Q}_{m,t,s}^*\left(n\right)$}\label{SubSec:QStarFunction}

With the result from Proposition \ref{Prop:JNewForm} available, one can now write a new expression for $\ddot{Q}_{m,t,s}^*\left(n\right)$.

\begin{proposition}
	\label{Prop:ModPartixSkipStarNewForm}
	The function $\ddot{Q}_{m,t,s}^*\left(n\right)$ can be written as:
	$$\ddot{Q}_{m,t,s}^*\left(n\right)=\frac{1}{m}\sum_{d\mid m}\Upsilon_{s,t}\left(d,\frac{m}{d}\right)c_d\left(n\right)$$
\end{proposition}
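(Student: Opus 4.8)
The plan is to invert the Fourier relation established in Proposition \ref{Prop:FJDFTPair}, which lets one recover $\ddot{Q}_{m,t,s}^*\left(n\right)$ (as a function of $n$) from $J_{m,t,s}\left(u\right)/m$ (as a function of $u$), and then to substitute the closed form $J_{m,t,s}\left(u\right)=\Upsilon_{s,t}\left(m/\left(m,u\right),\left(m,u\right)\right)$ supplied by Proposition \ref{Prop:JNewForm}. Writing the inversion explicitly gives
\[
\ddot{Q}_{m,t,s}^*\left(n\right)=\frac{1}{m}\sum_{u=1}^{m}J_{m,t,s}\left(u\right)\lambda_m^{-un},
\]
where I would deliberately let $u$ range over $\{1,\ldots,m\}$ rather than $\{0,\ldots,m-1\}$. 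This is still a complete residue system modulo $m$, so the value of each $\lambda_m^{-un}$ is unchanged, but now every index $u$ is positive, which is exactly the hypothesis needed for Proposition \ref{Prop:JNewForm} to apply termwise.

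The key observation is that $J_{m,t,s}\left(u\right)$ depends on $u$ only through $g=\left(m,u\right)$, since both arguments $m/\left(m,u\right)$ and $\left(m,u\right)$ of $\Upsilon$ are functions of $g$ alone. I would therefore partition the outer sum according to the value of $g\mid m$:
\[
\ddot{Q}_{m,t,s}^*\left(n\right)=\frac{1}{m}\sum_{g\mid m}\Upsilon_{s,t}\left(\frac{m}{g},g\right)\sum_{\substack{u=1\\\left(m,u\right)=g}}^{m}\lambda_m^{-un}.
\]
Substituting $u=gu'$, so that $u'$ runs over a reduced residue system modulo $m/g$, turns the inner sum into $\sum_{\left(u',m/g\right)=1}\lambda_{m/g}^{-u'n}$, which is precisely Ramanujan's sum $c_{m/g}\left(n\right)$; the minus sign is harmless because $c_d$ is an even function of $n$. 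Reindexing by $d=m/g$, so that $g=m/d$ and $\Upsilon_{s,t}\left(m/g,g\right)=\Upsilon_{s,t}\left(d,m/d\right)$, converts the sum over divisors $g$ of $m$ into the sum over divisors $d$ of $m$ in the statement, completing the computation.

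The main obstacle I anticipate is not algebraic but lies in making two technical points airtight. The first is that Proposition \ref{Prop:JNewForm} was proved only for positive $u$; shifting the summation range to $\{1,\ldots,m\}$ resolves this cleanly, with the index $u=m$ supplying the $d=1$ term and the constant $c_1\left(n\right)=1$. The second is that the inner character sum is genuinely Ramanujan's sum: here I would verify that $u\mapsto u'=u/g$ is a bijection between $\{u:\left(m,u\right)=g\}$ and the reduced residues modulo $m/g$, and then invoke the defining formula $c_d\left(n\right)=\sum_{\left(a,d\right)=1}\lambda_d^{an}$ together with the evenness identity $c_d\left(-n\right)=c_d\left(n\right)$. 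Once these are checked, the collapse of the double summation into the claimed single divisor sum is immediate.
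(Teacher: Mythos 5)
Your proof is correct and takes essentially the same route as the paper's: invert the discrete Fourier transform relating $\ddot{Q}_{m,t,s}^*\left(n\right)$ to $J_{m,t,s}\left(u\right)$, substitute the closed form from Proposition \ref{Prop:JNewForm}, group the sum by the value of $\left(m,u\right)$, recognize the inner character sum as Ramanujan's sum, and reindex $d\mapsto m/d$. Your shift of the summation range to $\{1,\ldots,m\}$ to satisfy the positivity hypothesis of Proposition \ref{Prop:JNewForm} is a nice technical refinement that the paper glosses over, but it does not alter the substance of the argument.
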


\begin{proof}
	By definition, $\left(1/m\right)J_{m,t,s}\left(u\right)$ as a function of $u$ is the inverse discrete Fourier transform of $\ddot{Q}_{m,t,s}^*\left(n\right)$ as a function of $n$. This fact allows one to write $\ddot{Q}_{m,t,s}^*\left(n\right)=\left(1/m\right)\sum_{u=0}^{m-1}J_{m,t,s}\left(u\right)\lambda_m^{-un}$. Using the result from Proposition \ref{Prop:JNewForm}, one can rewrite this as $\ddot{Q}_{m,t,s}^*\left(n\right)=\left(1/m\right)\sum_{u=0}^{m-1}\Upsilon_{s,t}\left(m/\left(m,u\right),\left(m,u\right)\right)\lambda_m^{-un}$.
	
	Notice that $\left(1/m\right)J_{m,t,s}\left(u\right)$ is a function of $\left(m,u\right)$. The value of $\left(m,u\right)$ is necessarily a divisor of $m$. Thus one may reorganize the sum such that the index is over all divisors of $m$, such that all appearances of $\left(m,u\right)$ are replaced with $d$, and $u$ with $jd$, where $j$ is coprime with $m$. This gives the updated form:
	$$\ddot{Q}_{m,t,s}^*\left(n\right)=\left(1/m\right)\sum_{d\mid m}\Upsilon_{s,t}\left(m/d,m\right)\sum_{j\in\left(\mathbb{Z}/d\mathbb{Z}\right)^\times}\lambda_{m/d}^{-jn}$$
	The inner summation is well-known to be equivalent to Ramanujan's sum. The desired form is then obtained by replacing the index $d$ by $m/d$, which plainly has the effect of reshuffling the terms within the summation and does not change the result.
\end{proof}

Furthermore, one can obtain a similar expression for $\ddot{Q}_{m,t,s}\left(n\right)$.

\begin{proposition}
	\label{Prop:ModPartixSkipNewForm}
	Let $s\geq 1$, then the function $\ddot{Q}_{m,t,s}\left(n\right)$ can be written as:
	$$\ddot{Q}_{m,t,s}\left(n\right)=\frac{1}{m}\sum_{d\mid m}c_d\left(n\right)\sum_{j\equiv t\bmod s}\left[d\mid j\right]\left(-1\right)^{j\left(d+1\right)/d}\binom{m/d}{j/d}$$
\end{proposition}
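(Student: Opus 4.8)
The plan is to reduce the statement to the already-established formula for $\ddot{Q}_{m,t,s}^*\left(n\right)$ in Proposition \ref{Prop:ModPartixSkipStarNewForm} and then verify a purely combinatorial identity for the coefficients $\Upsilon$. First I would apply Proposition \ref{Prop:ModPartixConv} termwise inside the defining sum $\ddot{Q}_{m,t,s}\left(n\right)=\sum_{u\equiv t\bmod s}Q_{m,u}\left(n\right)$, writing each $Q_{m,u}\left(n\right)=Q_{m,u-1}^*\left(n\right)+Q_{m,u}^*\left(n\right)$. The unshifted part reassembles into $\ddot{Q}_{m,t,s}^*\left(n\right)$, while reindexing $u\mapsto u-1$ in the shifted part moves its residue class from $t$ to $t-1$ modulo $s$, giving $\ddot{Q}_{m,t-1,s}^*\left(n\right)$. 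Hence $\ddot{Q}_{m,t,s}\left(n\right)=\ddot{Q}_{m,t-1,s}^*\left(n\right)+\ddot{Q}_{m,t,s}^*\left(n\right)$.

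Next I would substitute the formula of Proposition \ref{Prop:ModPartixSkipStarNewForm} for both terms and collect the common Ramanujan sums, obtaining $\ddot{Q}_{m,t,s}\left(n\right)=\frac{1}{m}\sum_{d\mid m}\left[\Upsilon_{s,t-1}\left(d,m/d\right)+\Upsilon_{s,t}\left(d,m/d\right)\right]c_d\left(n\right)$. Matching this against the claimed form reduces the whole proposition to the single identity, for each divisor $d$ with $\alpha=d$ and $\beta=m/d$, namely $\Upsilon_{s,t-1}\left(\alpha,\beta\right)+\Upsilon_{s,t}\left(\alpha,\beta\right)=\sum_{j\equiv t\bmod s}\left[\alpha\mid j\right]\left(-1\right)^{\left(j/\alpha\right)\left(\alpha+1\right)}\binom{\beta}{j/\alpha}$; the substitution $j=dk$ then rewrites the exponent as $\left(-1\right)^{k\left(d+1\right)}=\left(-1\right)^{j\left(d+1\right)/d}$ and the binomial as $\binom{m/d}{j/d}$, which is exactly the inner sum in the statement.

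The cleanest way to prove this coefficient identity is through the generating function rather than by bare Pascal bookkeeping. By Proposition \ref{Prop:FNewForm}, multiplying $F_m\left(1,\lambda_m^u,z\right)=\left(1-\left(-z\right)^\alpha\right)^\beta/\left(1+z\right)$ by $\left(1+z\right)$ cancels the denominator and produces $\left(1-\left(-z\right)^\alpha\right)^\beta$; expanding by the binomial theorem shows that the coefficient of $z^j$ in this product is exactly $\left[\alpha\mid j\right]\left(-1\right)^{\left(j/\alpha\right)\left(\alpha+1\right)}\binom{\beta}{j/\alpha}$, so the right-hand side of the needed identity is the sum of these coefficients over $j\equiv t\bmod s$. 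On the other hand, the coefficient of $z^j$ in $\left(1+z\right)F_m\left(1,\lambda_m^u,z\right)$ equals the sum of the $z^j$ and $z^{j-1}$ coefficients of $F_m\left(1,\lambda_m^u,z\right)$; by Proposition \ref{Prop:FNewForm2}, summing the former over $j\equiv t\bmod s$ yields $\Upsilon_{s,t}\left(\alpha,\beta\right)$, and summing the latter (after the shift $j\mapsto j-1$) yields $\Upsilon_{s,t-1}\left(\alpha,\beta\right)$. Equating the two expressions for the same coefficient sum gives the identity.

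I expect the only genuinely delicate point to be the boundary bookkeeping in this last step: $F_m\left(1,\lambda_m^u,z\right)$ has degree $m-1$ in $z$ whereas $\left(1-\left(-z\right)^\alpha\right)^\beta$ has degree $m=\alpha\beta$, so one must confirm that the top coefficient $z^m$ (present only after multiplying by $1+z$) is correctly accounted for and that the vanishing of $\binom{\beta-1}{\lfloor j/\alpha\rfloor}$ outside $0\le j\le m-1$ makes the shifted summation range match $\Upsilon_{s,t-1}$ without stray terms. The generating-function viewpoint makes these endpoints automatic, which is why I would favour it over a direct manipulation of the floor-function exponents in $\Upsilon$.
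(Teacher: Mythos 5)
Your proposal is correct, and its skeleton is exactly the paper's: decompose $\ddot{Q}_{m,t,s}\left(n\right)=\ddot{Q}_{m,t-1,s}^*\left(n\right)+\ddot{Q}_{m,t,s}^*\left(n\right)$ via Proposition \ref{Prop:ModPartixConv}, substitute Proposition \ref{Prop:ModPartixSkipStarNewForm} for both terms, and reduce everything to the identity $\Upsilon_{s,t-1}\left(d,m/d\right)+\Upsilon_{s,t}\left(d,m/d\right)=\sum_{j\equiv t\bmod s}\left[d\mid j\right]\left(-1\right)^{j\left(d+1\right)/d}\binom{m/d}{j/d}$. Where you diverge is in how that identity is verified: the paper pairs the term at $j$ of $\Upsilon_{s,t}$ with the term at $j-1$ of $\Upsilon_{s,t-1}$ and checks two cases by hand — when $d\nmid j$ the floors coincide and the signs cancel the pair, and when $d\mid j$ the floors differ by one and Pascal's rule produces $\binom{m/d}{j/d}$ — whereas you return to Proposition \ref{Prop:FNewForm}, note that $\left(1+z\right)F_m\left(1,\lambda_m^u,z\right)=\left(1-\left(-z\right)^\alpha\right)^\beta$, and read the identity off as an equality of coefficient sums of the same polynomial. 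Your route buys two small things: the endpoint bookkeeping (vanishing binomials outside $0\leq j\leq m-1$, the top coefficient $z^m$) is automatic rather than checked case by case, and your argument treats all $s\geq 1$ uniformly, whereas the paper explicitly sets the case $s=1$ aside as a separate ``direct evaluation'' — a case that in fact also succumbs to the pairing argument, since the index shift $j\mapsto j-1$ is a legitimate rearrangement of a sum over all integers with finitely many nonzero terms. The paper's version is more elementary (no return to the generating function), but yours is the tidier finish.
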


\begin{proof}
	As mentioned in Proposition \ref{Prop:ModPartixConv}, $Q_{m,t}\left(n\right)=Q_{m,t-1}^*\left(n\right)+Q_{m,t}^*\left(n\right)$, and so naturally it is also true that $\ddot{Q}_{m,t,s}\left(n\right)=\ddot{Q}_{m,t-1,s}^*\left(n\right)+\ddot{Q}_{m,t,s}^*\left(n\right)$. Applying the result from Proposition \ref{Prop:ModPartixSkipStarNewForm} and the definition for $\Upsilon_{s,t}\left(\alpha,\beta\right)$ gives the following expression:
	$$\ddot{Q}_{m,t,s}\left(n\right)=\frac{1}{m}\sum_{d\mid m}c_d\left(n\right)\sum_{j\equiv t\bmod s}\left(\left(-1\right)^{\left\lfloor{\frac{j}{d}}\right\rfloor+j}\binom{\frac{m}{d}-1}{\left\lfloor{\frac{j}{d}}\right\rfloor}-\left(-1\right)^{\left\lfloor{\frac{j-1}{d}}\right\rfloor+j}\binom{\frac{m}{d}-1}{\left\lfloor{\frac{j-1}{d}}\right\rfloor}\right)$$

	For the case $s>1$, the summand of the inner summation is clearly equal to zero when $d$ does not divide $j$. When $d$ divides $j$, the summand is easily shown to be equal to $\left(-1\right)^{j\left(d+1\right)/d}\binom{m/d}{j/d}$.
	
	The case $s=1$ can be shown through direct evaluation.
\end{proof}

\subsection{Proof for Theorem \ref{Thm:CmPartixForm1}}\label{SubSec:QFunction}

The proof for Theorem \ref{Thm:CmPartixForm1} follows quite naturally from the results above.

Note that $\ddot{Q}_{m,t,s}\left(n\right)=Q_{m,t}\left(n\right)$ when $s>m$ and $0\leq t<m$. Taking the result from Proposition \ref{Prop:ModPartixSkipNewForm} and setting $s=m+1$ immediately results in the desired expression.

Another way to establish the validity of Theorem \ref{Thm:CmPartixForm1} is to first notice the fact that, assuming that the proposed equation for $Q_{m,t}\left(n\right)$ is true, then the sum $\sum_{u\equiv 0\bmod s}Q_{m,u}\left(n\right)$ does indeed result in the correct form:
$$\ddot{Q}_{m,0,s}\left(n\right)=\frac{1}{m}\sum_{d\mid m}c_d\left(n\right)\sum_{u}\left(-1\right)^{su\left(d+1\right)/\left(d,s\right)}\binom{m/d}{su/\left(d,s\right)}$$
which is consistent with the result from Proposition \ref{Prop:ModPartixSkipNewForm}.

However, one still needs to show that this is not just a coincidence. It is clear that $\ddot{Q}_{m,0,s}\left(n\right)=\sum_{u\equiv 0\bmod s}Q_{m,u}\left(n\right)$ forms a system of equations for $s\in\{1,\ldots,m\}$. The ordered list of values $\mathbf{q}=\left(Q_{m,0}\left(n\right),Q_{m,1}\left(n\right),\ldots,Q_{m,m-1}\left(n\right)\right)$ can be related to $\mathbf{q'}=(\ddot{Q}_{m,0,1}\left(n\right),\ddot{Q}_{m,0,2}\left(n\right),\ldots,\ddot{Q}_{m,0,m}\left(n\right))$ through the matrix equation $\mathbf{M_mq}=\mathbf{q'}$, where $\mathbf{q}$ and $\mathbf{q'}$ are treated as column vectors. The matrix $\mathbf{M_m}$ is an $m$-by-$m$ matrix such that the entry $\left(\mathbf{M_m}\right)_{s,v}$ is equal to 1 when $s$ divides $v$, and equal to 0 otherwise. It can be shown quite easily through induction that $\det\left(\mathbf{M_m}\right)=\left(-1\right)^{m-1}$, such that $\mathbf{M_m}$ is always invertible. Thus the fact that the sum $\sum_{u\equiv 0\bmod s}Q_{m,u}\left(u\right)$ gives the correct form for $\ddot{Q}_{m,0,s}\left(n\right)$ is sufficient proof that the proposed form for $Q_{m,t}\left(n\right)$ is correct.

\section{Urn Model}\label{Sec:UrnModel}

\subsection{Probability Distribution}\label{SubSec:Probability}

Imagine a game in which $m$ objects labeled with the integers 0, 1, \ldots, and $m-1$ are prepared ahead of time. A fixed number $t$ of these objects are then randomly chosen without replacement. Such an arrangement is used by various lottery games. One can also imagine replacing the dice used in popular board games with this drawing mechanism.

For each drawing, it is always possible to calculate a statistic defined as $N'=\textrm{rem}\left(N,m\right)$, where $N=a_1+\ldots+a_t$, $\{a_1,\ldots,a_t\}$ is the set of numbers drawn, and $\textrm{rem}\left(N,m\right)$ is the remainder function with $m$ as the divisor. One may want to do that, for example, if the game board contains $m$ spaces labeled consecutively from 0 to $m-1$ arranged in a circular fashion such that the slot labeled $m-1$ is adjacent to the slot labeled 0. One may also want to do that simply for the sake of studying the statistic.

\begin{proposition}
	\label{Prop:UrnProbability}
	Given the urn model discussed above, the probability distribution of the statistic $N'=\textrm{rem}\left(a_1+\ldots+a_t,m\right)$ is equal to $Q_{m,t}\left(n\right)/\binom{m}{t}$, where $n\bmod m$ is any possible value of the statistic.
\end{proposition}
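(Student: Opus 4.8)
The plan is to reduce the statement to a direct counting argument, since $Q_{m,t}\left(n\right)$ and the urn experiment are simply two descriptions of the same combinatorial object. First I would fix the sample space: a single draw selects, uniformly and without replacement, a set of $t$ distinct labels from $\{0,\ldots,m-1\}$. Because the labels are distinct and the order of drawing does not affect the value of $N=a_1+\ldots+a_t$, the outcomes are exactly the $t$-element subsets of $\{0,\ldots,m-1\}$, each occurring with equal probability $1/\binom{m}{t}$. This supplies the denominator.

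Next I would identify the favorable outcomes for the event $N'=n$. By definition, $N'=\textrm{rem}\left(N,m\right)=n$ is the event that the drawn set $\{a_1,\ldots,a_t\}$ satisfies $a_1+\ldots+a_t\equiv n\bmod m$. A partition of $n\bmod m$ into $t$ distinct parts is, also by definition, precisely a collection of $t$ distinct members of $\mathbb{Z}/m\mathbb{Z}$ whose group sum is $n\bmod m$; representing each residue by its integer in $\{0,\ldots,m-1\}$ turns such a partition into a $t$-subset of $\{0,\ldots,m-1\}$ whose sum is congruent to $n$ modulo $m$. Hence the favorable subsets stand in an exact (indeed identity) correspondence with the partitions counted by $Q_{m,t}\left(n\right)$, so their number is $Q_{m,t}\left(n\right)$.

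With both counts established, the classical probability for equally likely outcomes yields $\Pr\left(N'=n\right)=Q_{m,t}\left(n\right)/\binom{m}{t}$, as claimed.

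I do not anticipate a genuine obstacle here; the only point demanding care is the bookkeeping that a drawn set of labels corresponds to a partition counted by $Q_{m,t}\left(n\right)$ rather than by $Q_{m,t}^*\left(n\right)$. Since the urn contains the ball labeled $0$ and a partition counted by $Q_{m,t}\left(n\right)$ may legitimately use the identity element $0\bmod m$ as a part, the correspondence is with $Q_{m,t}\left(n\right)$; verifying this, together with the equivalence between $\textrm{rem}\left(N,m\right)=n$ and the group sum of the parts being $n\bmod m$, constitutes the entirety of the argument.
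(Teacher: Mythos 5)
Your proof is correct, but it is more elementary than the paper's own argument, and the difference is worth spelling out. Both proofs share the skeleton of classical probability --- count the favorable outcomes, count all outcomes, divide --- and both identify the favorable outcomes with the partitions counted by $Q_{m,t}(n)$. For the favorable count, however, the paper does not invoke the combinatorial definition of $Q_{m,t}(n)$ directly as you do: it first counts draws with a fixed \emph{integer} sum $N$ as $q_{m-1,t}(N)+q_{m-1,t-1}(N)$ (splitting according to whether the ball labeled $0$ was drawn), then sums over all $N$ congruent to $n$ modulo $m$ to obtain $Q_{m,t}^*(n)+Q_{m,t-1}^*(n)$, and finally uses Proposition \ref{Prop:ModPartixConv} to recognize this as $Q_{m,t}(n)$; your closing remark about the ball labeled $0$ is exactly the content of that decomposition, so the two identifications agree, yours being stated at the level of subsets rather than routed through the $q$/$Q^*$ machinery of Section \ref{SubSec:MainDef}. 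The more substantive divergence is the denominator: you obtain $\binom{m}{t}$ from the elementary fact that an $m$-set has $\binom{m}{t}$ subsets of size $t$, all equally likely, whereas the paper instead computes $\sum_{n\in\mathbb{Z}/m\mathbb{Z}}Q_{m,t}(n)=\binom{m}{t}$ from Theorem \ref{Thm:CmPartixForm1} together with the identity $\sum_{n=0}^{d-1}c_d(n)=[d=1]$. Your route makes the proposition logically independent of Theorem \ref{Thm:CmPartixForm1} and of all the generating-function work, which is cleaner; the paper's route costs heavier machinery but doubles as a consistency check that the formula of Theorem \ref{Thm:CmPartixForm1} really does sum to $\binom{m}{t}$ over a full period.
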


\begin{proof}
	It is plain that the number of ways to obtain a certain fixed number $n$ after calculating the sum $a_1+\ldots+a_t$ is simply the sum of the two integer partition functions $q_{m-1,t}\left(n\right)+q_{m-1,t-1}\left(n\right)$. This is because one of the members of $\{a_1,\ldots,a_t\}$ can be equal to zero. The number of ways to obtain the number $n'$ after calculating $\textrm{rem}\left(n,m\right)$ is simply $\sum_{j}q_{m-1,t}\left(mj+n'\right)+\sum_{j}q_{m-1,t-1}\left(mj+n'\right)$, which, as discussed in Section \ref{SubSec:MainDef}, is equal to $Q_{m,t}^*\left(n\right)+Q_{m,t-1}^*\left(n\right)=Q_{m,t}\left(n\right)$.
	
	To find the probability, it is necessary to find the sum $\sum_{n\in\mathbb{Z}/m\mathbb{Z}}Q_{m,t}\left(n\right)$. This can be obtained from Theorem \ref{Thm:CmPartixForm1} and the well-known fact that $\sum_{n=0}^{d-1}c_d\left(n\right)=\left[d=1\right]$. The result is $\sum_{n\in\mathbb{Z}/m\mathbb{Z}}Q_{m,t}\left(n\right)=\binom{m}{t}$. The desired probability is thus $Q_{m,t}\left(n\right)$ divided by this value.
\end{proof}

\subsection{Some Key Features of the Diagram for $Q_{m,t}\left(n\right)$}\label{SubSec:QDiagram}

Suppose one plays a lottery-like game, in which one wins by guessing the value of $N'=\textrm{rem}\left(a_1+\ldots+a_t,m\right)$ correctly. It is natural to ask if there exists one member or a subset of $\mathbb{Z}/m\mathbb{Z}$ such that the probability of winning is maximized.

Before answering this question, it is useful to examine the properties of $Q_{m,t}\left(n\right)$ further. It was already established in the section above that the set $\{Q_{m,t}\left(0\right),\ldots,Q_{m,t}\left(m-1\right)\}$ is an integer partition of the binomial coefficient $\binom{m}{t}$. It is interesting to note that there is a related way to write the same binomial coefficient as a sum of a set of integers. First, it is necessary to define the following quantity.

\begin{definition}
	\label{Def:AFunction}
	Let $m$ and $t$ be integers such that $m>0$. The quantity $A\left(m,t\right)$ is defined as the sum:
	$$A\left(m,t\right)=\frac{1}{m}\sum_{d\mid\left(m,t\right)}\left(-1\right)^{t\left(d+1\right)/d}\mu\left(d\right)\binom{m/d}{t/d}$$

\end{definition}

The reason for defining this quantity will be seen shortly.

If one attempts to plot the value of $Q_{m,t}\left(n\right)$ versus $n$ on a graph, one gets the image of a ``wall with battlements". This is especially true if one presents the graph as something similar to a Ferrers diagram. This is done by drawing a column of $Q_{m,t}\left(0\right)$ dots or boxes, followed by a column of $Q_{m,t}\left(1\right)$ boxes to the right of the previous column, and so on. The boxes are bottom-justified, as if affected by gravity. The total number of boxes, as discussed previously, is equal to $\binom{m}{t}$

\begin{figure}[ht]
	\centering
	\includegraphics[scale=0.5]{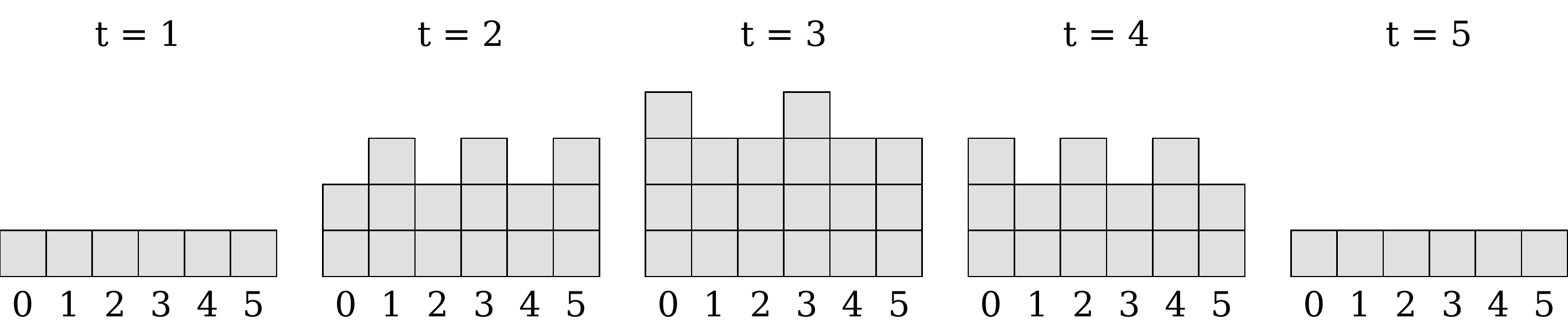}
	\caption{Diagram for $m=6$.\label{fig:Diagram6}}
\end{figure}

The analogy with Ferrers diagrams beckons one to count rows of boxes instead of just columns. Recall that:
$$\left(x-1\right)\left(x-\lambda_m\right)\ldots\left(x-\lambda_m^{m-1}\right)=\sum_{t\geq 0}\sum_{n\in\mathbb{Z}/m\mathbb{Z}}\left(-1\right)^{m-t}Q_{m,m-t}\left(n\right)\lambda_m^nx^t$$
But the left hand side is just $x^m-1$. It follows that $\sum_{n\in\mathbb{Z}/m\mathbb{Z}}Q_{m,t}\left(n\right)\lambda_m^n=0$ for $0<t<m$. Thus if each box on the $n$ \textsuperscript{th} column in the diagram is seen as representing the root of unity $\lambda_m^n$, then the sum of all the boxes of the diagram for $0<t<m$ is equal to zero. Because of the well-known fact that $1+\lambda_m^d+\ldots+\lambda_m^{(m/d-1)d}=0$ for any positive integer $d$ dividing $m$ not equal to $m$ itself, it would seem that there should exist integers $\{A_{m,t,d_1}^*,\ldots,A_{m,t,d_\sigma}^*\}$, where $\{d_1,\ldots,d_\sigma\}$ is the set of proper divisors of $m$, such that the following two equations are met simultaneously for $1\leq t\leq m-1$:

\bigskip
\begin{minipage}{.5\linewidth}
	$$\binom{m}{t}=\sum_{d\mid m,d\neq m}\frac{m}{d}A_{m,t,d}^*$$
\end{minipage}%
\begin{minipage}{.5\linewidth}
	$$Q_{t,m}(n)=\sum_{d\mid m,d\neq m}\left[d\mid n\right]A_{m,t,d}^*$$
\end{minipage}
\smallskip

Before proceeding along this line of thought, it is useful to have the following lemma.

\begin{proposition}
	\label{Prop:MobiusInvVar}
	Let $X\left(m,t\right)$ be a function mapping a pair of positive integers to some subset of the complex plane. Also, let $Y\left(m,t\right)$ be a function meeting the same description. For the following two equations, if one of them is true for all $m$ and $t$, then the other one is also true for all $m$ and $t$.
	$$Y\left(m,t\right)=\sum_{d\mid\left(m,t\right)}\left(-1\right)^{t\left(d+1\right)/d}\frac{m}{d}X\left(\frac{m}{d},\frac{t}{d}\right)$$
	$$X\left(m,t\right)=\frac{1}{m}\sum_{d\mid\left(m,t\right)}\left(-1\right)^{t\left(d+1\right)/d}\mu\left(d\right)Y\left(\frac{m}{d},\frac{t}{d}\right)$$
\end{proposition}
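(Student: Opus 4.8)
The plan is to recognize this statement as a two-variable analogue of Möbius inversion and to prove it by showing that the two displayed transformations are mutual inverses: if I substitute one formula into the other, the composite must collapse to the identity. Both maps are ``triangular'' in the size of $m$ (the diagonal term $d=1$ of the first equation carries the nonzero coefficient $(-1)^{2t}m=m$, and the second carries $1/m$), so establishing that the composite is the identity is enough to force the equivalence; I would nonetheless verify both orders, since they are computed identically.

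First I would assume the first equation holds for all pairs and feed it into the right-hand side of the second. Writing $g=(m,t)$ and expanding $Y(m/e,t/e)$ via the first formula produces a double sum indexed by $e\mid g$ and by the divisors of $(m/e,t/e)$. The gcd identity $(m/e,t/e)=(m,t)/e=g/e$, valid because $e\mid g$, shows the inner index $d'$ runs over divisors of $g/e$. Setting $d=ed'$ then lets me reorganize $\sum_{e\mid g}\sum_{d'\mid g/e}$ as $\sum_{d\mid g}\sum_{e\mid d}$, collecting all contributions to a fixed term $X(m/d,t/d)$. The numerical prefactors are immediate: $\tfrac1m\cdot\tfrac{m/e}{d'}=\tfrac1d$, so only the signs and a Möbius factor $\mu(e)$ remain inside the inner sum.

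The crux, and the step I expect to cause the most trouble, is the sign bookkeeping. The key observation is that, because $d\mid t$ on the relevant index set, one has $(-1)^{t(d+1)/d}=(-1)^{t+t/d}=(-1)^t(-1)^{t/d}$. I would use this to show that the two sign factors arising from the nested application, namely the factor $(-1)^{t(e+1)/e}$ attached to $e$ acting on $(m,t)$ and the factor $(-1)^{(t/e)(d'+1)/d'}$ attached to $d'$ acting on $(m/e,t/e)$, multiply to give exactly $(-1)^{t(d+1)/d}$, the cross term $(-1)^{2t/e}$ cancelling. The essential point is that this product is \emph{independent} of how $d$ is split as $ed'$; that independence is precisely what makes the collapse work.

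With the signs pulled out, the inner sum reduces to $(-1)^{t(d+1)/d}\tfrac1d\sum_{e\mid d}\mu(e)=(-1)^{t(d+1)/d}\tfrac1d[d=1]$, by the standard identity $\sum_{e\mid d}\mu(e)=[d=1]$. Only the $d=1$ term survives, contributing $X(m,t)$, so the composite is the identity and the second equation follows from the first. The reverse implication is obtained by the symmetric computation, substituting the second formula into the right-hand side of the first, reindexing $D=dd''$, and using $\sum_{d\mid D}\mu(D/d)=[D=1]$, which collapses in exactly the same way to $Y(m,t)$.
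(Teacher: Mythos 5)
Your proposal is correct and follows exactly the route the paper indicates: substitute one formula into the other, reindex the double sum over factorizations $d = ed'$, and collapse with $\sum_{e\mid d}\mu(e)=[d=1]$. The paper leaves these details (in particular the sign bookkeeping $(-1)^{t(e+1)/e}(-1)^{(t/e)(d'+1)/d'}=(-1)^{t(d+1)/d}$, which you verify correctly using $d\mid t$) as an exercise, so your write-up is simply a completed version of the paper's sketch.
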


Note that this can be seen as an alternative form of the M\"{o}bius inversion formula. It can be proven similarly, by substituting one equation into another and making use of the identity $\sum_{d\mid D}\mu\left(d\right)=\left[D=1\right]$. Since the proof for the M\"{o}bius inversion formula is well-known, the details will not be worked out here.

A second lemma will also be used.

\begin{proposition}
	\label{Prop:RamanujanSumExp}
	Let $X\left(m,d\right)$ be a function defined for all positive integers $m$ and $d$. Also let $u$ and $v$ be any positive integers, and $Y\left(m,n,u,v\right)$ be defined by the sum:
	$$Y\left(m,n,u,v\right)=\frac{1}{muv}\sum_{d\mid m}c_d\left(n\right)X\left(\frac{mu}{d},dv\right)$$
	Then the following expression is a true statement:
	$$Y\left(m,n,u,v\right)=\sum_{d_1d_2\mid m}\left[d_1\mid n\right]\frac{d_1}{muv}\mu\left(d_2\right)X\left(\frac{mu}{d_1d_2},d_1d_2v\right)$$
\end{proposition}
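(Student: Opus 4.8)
The plan is to expand Ramanujan's sum inside the definition of $Y$ using its standard divisor representation, and then reorganize the resulting double sum by a change of summation variables. No cancellation or analytic estimation is involved; the whole argument is a reindexing.

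First I would recall the classical evaluation $c_d\left(n\right)=\sum_{e\mid\left(d,n\right)}e\,\mu\left(d/e\right)$, where $e$ runs over the divisors of $\left(d,n\right)$. Substituting this into the given definition yields
$$Y\left(m,n,u,v\right)=\frac{1}{muv}\sum_{d\mid m}\sum_{e\mid\left(d,n\right)}e\,\mu\left(d/e\right)X\left(\frac{mu}{d},dv\right),$$
which is a sum over pairs $\left(d,e\right)$ constrained by $d\mid m$, $e\mid d$, and $e\mid n$.

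Next I would reindex by setting $d_1=e$ and $d_2=d/e$, so that $d=d_1d_2$ and $\mu\left(d/e\right)=\mu\left(d_2\right)$. Under this substitution the constraints translate cleanly: the condition $e\mid n$ becomes $d_1\mid n$, the condition $e\mid d$ is automatic, and $d\mid m$ becomes $d_1d_2\mid m$. Conversely, every pair $\left(d_1,d_2\right)$ with $d_1d_2\mid m$ and $d_1\mid n$ arises from exactly one admissible $\left(d,e\right)$, so the map is a bijection between the two index sets. Recording the constraint $d_1\mid n$ as an indicator $\left[d_1\mid n\right]$ and letting $d_1d_2$ range over all divisors of $m$, the double sum becomes
$$Y\left(m,n,u,v\right)=\frac{1}{muv}\sum_{d_1d_2\mid m}\left[d_1\mid n\right]d_1\,\mu\left(d_2\right)X\left(\frac{mu}{d_1d_2},d_1d_2v\right),$$
which is precisely the claimed identity after absorbing the factor $d_1/\left(muv\right)$ into the summand.

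The only point requiring genuine care is verifying that the substitution $\left(d,e\right)\mapsto\left(d_1,d_2\right)=\left(e,d/e\right)$ is a bijection between the admissible pairs and that each divisibility constraint transfers correctly; once this bookkeeping is confirmed, the identity is immediate. I therefore expect this verification of the change of variables to be the main (and essentially the only) obstacle.
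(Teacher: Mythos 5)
Your proof is correct, but it takes a genuinely different route from the paper. The paper does not give a full proof at all: it sketches an induction on the prime factorization of $m$, asserting that the identity holds for $m=1$ and $m$ prime, that it propagates from the divisors of $m'$ to $m=m'p$ (with separate cases for $p\mid m'$ and $p\nmid m'$), and then leaves the details ``as an exercise for the interested reader.'' You instead expand Ramanujan's sum via the classical Kluyver/H\"older formula $c_d\left(n\right)=\sum_{e\mid\left(d,n\right)}e\,\mu\left(d/e\right)$ and reindex the resulting double sum by the bijection $\left(d,e\right)\mapsto\left(d_1,d_2\right)=\left(e,d/e\right)$, whose inverse $\left(d_1,d_2\right)\mapsto\left(d_1d_2,d_1\right)$ makes the transfer of the constraints $d\mid m$, $e\mid d$, $e\mid n$ into $d_1d_2\mid m$, $d_1\mid n$ immediate; the indicator $\left[d_1\mid n\right]$ then lets you extend the index set to all pairs with $d_1d_2\mid m$ without changing the value. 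Your approach buys a complete, self-contained, uniform-in-$m$ argument in a few lines, at the modest cost of invoking the divisor-sum evaluation of $c_d\left(n\right)$ (which the paper would surely accept as a ``standard property''); the paper's inductive scheme avoids quoting that formula explicitly but requires multiplicativity bookkeeping and case analysis that it never carries out. If anything, your argument would be a strict improvement over what appears in the paper.
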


Note that the summation is over all positive integers $d_1$ and $d_2$ meeting the stated condition.

It is easy to show, using standard properties of Ramanujan's sum, that the claim holds true when $m=1$ or when $m$ is prime. Using the same standard properties, it is possible to show that the desired expression holds true for $m=m'p$ if it is true for all $m$ equal to any divisors of $m'$. Note that the cases for $p$ coprime with $m'$ and $p$ not coprime with $m'$ need to be shown separately. Thus the expression can be shown to be true through induction. Due to the straightforward nature and the space required to develop the steps in full, this will be left as an exercise for the interested reader.

The values of $A_{m,t,d}^*$ can then be easily worked out from the results developed so far.

\begin{proposition}
	\label{Prop:ModPartixFuncNewForm}
	Let $m$ and $t$ be integers, such that $m>0$, then the quantity $A\left(m,t\right)$ agrees with the identities below.
	$$\binom{m}{t}=\sum_{d\mid\left(m,t\right)}\left(-1\right)^{t\left(d+1\right)/d}\frac{m}{d}A\left(\frac{m}{d},\frac{t}{d}\right)$$
	$$Q_{m,t}\left(n\right)=\sum_{d\mid\left(m,t,n\right)}\left(-1\right)^{t\left(d+1\right)/d}A\left(\frac{m}{d},\frac{t}{d}\right)$$
\end{proposition}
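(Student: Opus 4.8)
The plan is to treat the two identities separately, obtaining the first from the generalized Möbius inversion of Proposition \ref{Prop:MobiusInvVar} and the second from Theorem \ref{Thm:CmPartixForm1} by expanding $A$ and collapsing the resulting double sum into a Ramanujan sum.

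For the first identity, I would invoke Proposition \ref{Prop:MobiusInvVar} with the choices $X\left(m,t\right)=A\left(m,t\right)$ and $Y\left(m,t\right)=\binom{m}{t}$. Under these substitutions the second relation of that proposition reads exactly as Definition \ref{Def:AFunction}, so it holds by construction. The proposition then guarantees that the first relation holds as well, and that relation is precisely the claimed expansion of $\binom{m}{t}$ in terms of $A\left(m/d,t/d\right)$. No further computation is needed here.

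For the second identity, I would substitute Definition \ref{Def:AFunction} into the proposed right-hand side. Writing $A\left(m/d,t/d\right)=\tfrac{d}{m}\sum_{e\mid\left(m/d,t/d\right)}\left(-1\right)^{\left(t/d\right)\left(e+1\right)/e}\mu\left(e\right)\binom{m/\left(de\right)}{t/\left(de\right)}$ produces a double sum over $d\mid\left(m,t,n\right)$ and $e\mid\left(m/d,t/d\right)$. I would then reindex by the single divisor $f=de$: for fixed $f\mid\left(m,t\right)$ the admissible pairs are $d\mid\left(f,n\right)$ with $e=f/d$. The two sign factors multiply to $\left(-1\right)^{t+t/d}\left(-1\right)^{t/d+t/\left(de\right)}=\left(-1\right)^{t+t/\left(de\right)}=\left(-1\right)^{t\left(f+1\right)/f}$, since the term $2t/d$ in the exponent is even; the binomial coefficient becomes $\binom{m/f}{t/f}$, while the factor $d/m$ leaves $1/m$ outside and $d$ inside the inner sum. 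What remains is $\tfrac1m\sum_{f\mid\left(m,t\right)}\left(-1\right)^{t\left(f+1\right)/f}\binom{m/f}{t/f}\sum_{d\mid\left(f,n\right)}d\,\mu\left(f/d\right)$. Recognizing the inner sum as the standard divisor formula for Ramanujan's sum, $\sum_{d\mid\left(f,n\right)}d\,\mu\left(f/d\right)=c_f\left(n\right)$, collapses the expression to exactly the formula of Theorem \ref{Thm:CmPartixForm1}, which establishes the identity.

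I expect the sign bookkeeping to be the only genuine obstacle: one must verify that $\left(-1\right)^{t\left(d+1\right)/d}\left(-1\right)^{\left(t/d\right)\left(e+1\right)/e}$ simplifies to $\left(-1\right)^{t\left(de+1\right)/\left(de\right)}$, i.e. that the exponents telescope modulo $2$, and must confirm that the reindexing keeps the divisibility constraints $d\mid\left(m,t,n\right)$ and $e\mid\left(m/d,t/d\right)$ consistent with $f\mid\left(m,t\right)$ and $d\mid\left(f,n\right)$. As an alternative to the direct substitution, one could instead apply Proposition \ref{Prop:RamanujanSumExp} with $u=v=1$ and $X\left(m/d,d\right)=\left[d\mid t\right]\left(-1\right)^{t\left(d+1\right)/d}\binom{m/d}{t/d}$; this converts the Ramanujan-sum form of Theorem \ref{Thm:CmPartixForm1} into a sum over pairs $\left(d_1,d_2\right)$, and regrouping by the divisor $d_1\mid n$ yields the same result after the identical sign simplification. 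Either route reduces the proposition to Theorem \ref{Thm:CmPartixForm1} together with the definition of $A$.
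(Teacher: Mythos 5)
Your proposal is correct and takes essentially the same approach as the paper: the first identity is obtained exactly as in the paper by feeding Definition \ref{Def:AFunction} into Proposition \ref{Prop:MobiusInvVar}, and your second-identity computation is the paper's argument run in reverse --- the paper expands $c_d\left(n\right)$ in Theorem \ref{Thm:CmPartixForm1} via Proposition \ref{Prop:RamanujanSumExp} and recognizes $A$, while you expand $A$ and recognize $c_f\left(n\right)$ through the divisor formula $\sum_{d\mid\left(f,n\right)}d\,\mu\left(f/d\right)=c_f\left(n\right)$, which is exactly the identity that Proposition \ref{Prop:RamanujanSumExp} packages. Indeed, the alternative you sketch at the end (applying Proposition \ref{Prop:RamanujanSumExp} with $u=v=1$) is precisely the paper's proof.
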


Thus it must be true that the hypothetical quantity $A_{m,t,d}^*$ above is given by $A_{m,t,d}^*=\left(-1\right)^{t\left(d+1\right)/d}A\left(m/d,t/d\right)$ and $A_{m,t,m}^*=0$.

\begin{proof}
	The second proposed identity is a consequence of Proposition \ref{Prop:RamanujanSumExp}, which has the following special case:
	$$Q_{m,t}\left(n\right)=\sum_{d_1\mid\left(m,t\right)}\left[d_1\mid n\right]\frac{d_1}{m}\sum_{d_2\mid\left(m/d_1,t\right)}\mu\left(d_2\right)\left(-1\right)^{t\left(d_1d_2+1\right)/\left(d_1d_2\right)}\binom{m/\left(d_1d_2\right)}{t/\left(d_1d_2\right)}$$
	
	It is clear from this equation and Definition \ref{Def:AFunction} that the second proposed identity is true. The first identity then follows directly by inverting Definition \ref{Def:AFunction} using Proposition \ref{Prop:MobiusInvVar}.
\end{proof}

Since $Q_{m,t}\left(n_1\right)=Q_{m,t}\left(n_2\right)$ whenever $\left(n_1,m,t\right)=\left(n_2,m,t\right)$, the various questions regarding the shape of the diagram, such as the left-right symmetry and ``wall with battlements" appearance are explained satisfactorily. The fact that $Q_{m,t}\left(n\right)$ is a constant function of $n$ if and only if $m$ and $t$ are coprime is now also obvious.

It is useful to point out a few basic properties of $A\left(m,t\right)$ at this point.

\begin{proposition}
	Let $m>0$. The following statements are true regarding the quantity $A\left(m,t\right)$:
	\begin{enumerate}
		\setlength\itemsep{0em}
		\item $A\left(m,t\right)$ is non-negative.
		\item $A\left(m,0\right)=\left[m=1\right]$.
		\item $A\left(m,m\right)=\left[m\in\{1,2\}\right]$.
		\item $A\left(m,t\right)=0$ whenever $t>m\geq 1$.
	\end{enumerate}
\end{proposition}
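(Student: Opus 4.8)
The plan is to reduce all four statements to a single clean identity, namely $A\left(m,t\right)=Q_{m,t}\left(1\right)$. To obtain this, I would take the second identity of Proposition \ref{Prop:ModPartixFuncNewForm}, that is $Q_{m,t}\left(n\right)=\sum_{d\mid\left(m,t,n\right)}\left(-1\right)^{t\left(d+1\right)/d}A\left(m/d,t/d\right)$, and evaluate it at $n=1$. Since $\left(m,t,1\right)=1$ for every $m$ and $t$, the sum collapses to its single $d=1$ term; and because $\left(-1\right)^{t\left(1+1\right)/1}=\left(-1\right)^{2t}=1$, this yields $A\left(m,t\right)=Q_{m,t}\left(1\right)$. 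With this in hand each of the four claims becomes an assertion about the combinatorial quantity $Q_{m,t}\left(1\right)$, namely the number of $t$-element subsets of $\mathbb{Z}/m\mathbb{Z}$ whose sum is $1\bmod m$.

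Statements (1) and (4) then follow almost immediately. Non-negativity holds because $Q_{m,t}\left(1\right)$ counts subsets and so cannot be negative. For statement (4), if $t>m$ there are simply not enough distinct residues available in $\mathbb{Z}/m\mathbb{Z}$ to form a $t$-element subset, hence $Q_{m,t}\left(1\right)=0$. For statement (2), the only $0$-element subset is the empty set, whose sum is $0\bmod m$; this equals $1\bmod m$ exactly when $m\mid 1$, i.e.\ $m=1$, giving $A\left(m,0\right)=Q_{m,0}\left(1\right)=\left[m=1\right]$.

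For statement (3) I would argue that an $m$-element subset of $\mathbb{Z}/m\mathbb{Z}$ must be the whole group, so there is at most one such subset and its sum is $0+1+\cdots+\left(m-1\right)=m\left(m-1\right)/2$. The remaining work is a parity reduction of this value modulo $m$: when $m$ is odd one gets $m\left(m-1\right)/2\equiv 0\bmod m$, and when $m$ is even one gets $m\left(m-1\right)/2\equiv m/2\bmod m$. Requiring the sum to equal $1\bmod m$ then forces $m=1$ in the odd case (since $0\equiv 1\bmod m$ iff $m\mid 1$) and forces $m/2=1$, i.e.\ $m=2$, in the even case. Hence $A\left(m,m\right)=Q_{m,m}\left(1\right)=\left[m\in\{1,2\}\right]$.

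The main obstacle is really just spotting the reduction $A\left(m,t\right)=Q_{m,t}\left(1\right)$; once that identity is established the rest is the short parity computation for statement (3) together with a check that the degenerate case $m=1$, where $1\equiv 0$, causes no trouble. As a sanity cross-check, statement (2) can also be verified directly from Definition \ref{Def:AFunction} by setting $t=0$: there $\left(m,t\right)=m$, each factor $\left(-1\right)^{t\left(d+1\right)/d}$ and each $\binom{m/d}{0}$ equals $1$, so $A\left(m,0\right)=\left(1/m\right)\sum_{d\mid m}\mu\left(d\right)=\left[m=1\right]$, confirming the result independently of the combinatorial interpretation.
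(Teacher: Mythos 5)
Your proposal is correct, and its backbone is the same as the paper's: both proofs hinge on specializing the second identity of Proposition \ref{Prop:ModPartixFuncNewForm} at $n=1$, where $\left(m,t,1\right)=1$ collapses the sum to $A\left(m,t\right)=Q_{m,t}\left(1\right)$, and both then get statement (1) from non-negativity of a counting function and statement (4) from the impossibility of a $t$-element subset of $\mathbb{Z}/m\mathbb{Z}$ when $t>m$. Where you diverge is in statements (2) and (3): the paper obtains these by direct evaluation of the sum in Definition \ref{Def:AFunction}, whereas you stay inside the combinatorial interpretation, noting that the unique $0$-element subset has sum $0$ and the unique $m$-element subset (the whole group) has sum $m\left(m-1\right)/2$, then doing the parity reduction modulo $m$ to force $m=1$ or $m\in\{1,2\}$ respectively. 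Your route is more uniform --- all four claims flow from the single identity $A\left(m,t\right)=Q_{m,t}\left(1\right)$ --- and your handling of the degenerate case $m=1$ (where $1\equiv 0$) is correct; the paper's direct evaluation, by contrast, verifies (2) and (3) without invoking any combinatorial meaning, which is the computation you relegate to a cross-check. Both arguments are valid; yours arguably reads more cleanly, at the cost of needing the small observation that the sum of all elements of $\mathbb{Z}/m\mathbb{Z}$ is $0$ or $m/2$ according to the parity of $m$.
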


\begin{proof}
	From Proposition \ref{Prop:ModPartixFuncNewForm}, one can set $n=1$ and obtain $Q_{m,t}\left(1\right)=A\left(m,t\right)$. Since $Q_{m,t}\left(1\right)$ is non-negative, $A\left(m,t\right)$ is also non-negative. The second and third statements can be obtained through direct evaluation of Definition \ref{Def:AFunction}. The fourth statement also follows from the property of $Q_{m,1}\left(1\right)$: logically one cannot create a subset of $\mathbb{Z}/m\mathbb{Z}$ with more than $m$ members, and numerically the recursion formula for $q_{m,t}\left(n\right)$ ensures this claim to be true.
\end{proof}

\subsection{Proof of Theorem \ref{Thm:PartixMax} - Finding $n$ That Maximizes $Q_{m,t}\left(n\right)$}\label{SubSec:QDiagramMax}

There is now enough information to answer the question asked at the beginning of this section.

Consider the expression for $Q_{m,t}\left(n\right)$ from the second statement in Proposition \ref{Prop:ModPartixFuncNewForm}. If $\left(t,m\right)$ is odd, then the index $d$ must be odd, and the sign of the summand is always positive. Since $A\left(m,t\right)$ is non-negative, it follows that the maximum value is obtained when the sum has the largest possible number of non-zero terms. This happens when $n$ contains all the divisors of $\left(t,m\right)$. Thus the first statement of Theorem \ref{Thm:PartixMax} is true.

If $\left(t,m\right)$ is even, then the sum for $Q_{m,t}\left(n\right)$ from Proposition \ref{Prop:ModPartixFuncNewForm} can contain negative terms. This happens when the 2-adic valuations of $d$ and $t$ are the same non-zero value. Thus the value of $Q_{m,t}\left(n\right)$ is maximized when the set of divisors of $\left(n,t,m\right)$ is the set of divisors of $\left(t,m\right)$, except for the ones with $v_2\left(t\right)$ as the 2-adic valuation. This explains statements 2 and 3 of Theorem \ref{Thm:PartixMax}.

\section{Relationship with Necklaces}\label{Sec:Necklace}

\subsection{Some Results Regarding Periodic Necklaces}\label{SubSec:NecklaceFreq}

As mentioned in the introduction, a major goal of this paper is to find a way to related partitions of arbitrary elements of $\mathbb{Z}/m\mathbb{Z}$ into distinct parts to certain subsets of bi-color necklaces with $m$ beads, even for the case when $m$ is even. In this section, it will be shown that the results developed thus far allow one to achieve that goal.

Key to this outcome is the concept of the ``frequency" of a necklace, which was already described in the introduction. A necklace with frequency equal to 1 is also said to be aperiodic. As is well-known, there is a bijection between binary Lyndon words and aperiodic bi-color necklaces. An all-black or all-white necklace has $m$ as the frequency, where $m$ is the number of beads in the necklace.

If the frequency $u$ of a necklace is not equal to 1, one can always cut the necklace at $u-1$ equally-spaced points to create $u$ equal segments, and then stitch the two sides of each segment together to form $u$ equivalent smaller necklaces. Alternatively, one can take $u$ equivalent aperiodic necklaces, convert them into equivalent segments by cutting at the same point, and stitch the segments together to create a new larger necklace with $u$ as the new frequency.

\begin{definition}
	\label{Def:NecklaceFreqFunc}
	Let $m$ and $t$ be integers such that $m>0$ and $0\leq t \leq m$. Also let $U=\{u_1,\ldots,u_{\left|U\right|}\}$ be any set of positive integers. Then $\mathcal{N}_{m,t,U}$ is defined as the set of bi-color necklaces of $t$ black beads, $m-t$ white beads, and frequency equal to one of the members of $U$.
\end{definition}

Note that $\mathcal{N}_{m,t,U}$ is the union of the disjoint sets $\mathcal{N}_{m,t,\{u_1\}}$, \ldots, $\mathcal{N}_{m,t,\{u_{\left|U\right|}\}}$, such that $\left|\mathcal{N}_{m,t,U}\right|=\sum_{u\in U}\mathcal{N}_{m,t,\{u\}}$. When $U$ is the set of all divisors of the positive integer $n$, $\mathcal{N}_{m,t,U}$ will be written as $\mathcal{N}_{m,t,\left<n\right>}$.

\begin{proposition}
	\label{Prop:NecklaceFreqFuncNewForm}
	Let $m$, $t$, and $n$ be positive integers such that $m\geq t$, then:
	$$\left|\mathcal{N}_{m,t,\left<n\right>}\right|=\sum_{d\mid\left(m,t,n\right)}\left|\mathcal{N}_{m/d,t/d,\left<1\right>}\right|$$
\end{proposition}

\begin{proof}
	$\mathcal{N}_{m,t,\{u\}}$ is non-empty if and only if $u$ shares common factors with $\left(m,t\right)$, and so $\mathcal{N}_{m,t,\left<n\right>}$ can also be written as $\mathcal{N}_{m,t,\left<\left(m,t,n\right)\right>}$. The number of necklaces within the set $\mathcal{N}_{m,t,\left<n\right>}$ is then clearly the sum of $\left|\mathcal{N}_{m,t,\{d\}}\right|$ over all divisors $d$ of $\left(m,t,n\right)$. But $\left|\mathcal{N}_{m,t,\{d\}}\right|=\left|\mathcal{N}_{m/d,t/d,\{1\}}\right|$ using the necklace cutting argument described previously.
\end{proof}

\begin{proposition}
	\label{Prop:NecklaceFreqFuncNewForm2}
	Let $m$, $t$, and $n$ be positive integers such that $m\geq t$, then:
	$$\left|\mathcal{N}_{m,t,\left<n\right>}\right|=\frac{1}{m}\sum_{d\mid\left(m,t\right)}\binom{m/d}{t/d}c_d\left(n\right)$$
\end{proposition}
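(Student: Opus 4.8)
The plan is to build directly on Proposition~\ref{Prop:NecklaceFreqFuncNewForm}, which already writes $\left|\mathcal{N}_{m,t,\left<n\right>}\right|$ as a sum of aperiodic necklace counts $\left|\mathcal{N}_{m/d,t/d,\left<1\right>}\right|$ over $d\mid(m,t,n)$. The first step is to insert a closed form for the aperiodic count. Since aperiodic bi-color necklaces are in bijection with binary Lyndon words, the standard Lyndon-word enumeration gives $\left|\mathcal{N}_{M,T,\left<1\right>}\right|=\frac{1}{M}\sum_{e\mid(M,T)}\mu(e)\binom{M/e}{T/e}$; specializing to $M=m/d$ and $T=t/d$ yields $\left|\mathcal{N}_{m/d,t/d,\left<1\right>}\right|=\frac{d}{m}\sum_{e\mid(m/d,t/d)}\mu(e)\binom{m/(de)}{t/(de)}$.

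Next I would merge the two summations into a single double sum and substitute $f=de$. For each fixed $d\mid(m,t,n)$ the inner index $e$ runs over the divisors of $(m/d,t/d)$, so $f=de$ runs over exactly the divisors of $(m,t)$ that are multiples of $d$. Interchanging the order of summation then regroups everything by $f$, giving
$$\left|\mathcal{N}_{m,t,\left<n\right>}\right|=\frac{1}{m}\sum_{f\mid(m,t)}\binom{m/f}{t/f}\sum_{\substack{d\mid f\\ d\mid(m,t,n)}}d\,\mu(f/d).$$

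The decisive step is simplifying the inner sum. Because $f\mid(m,t)$, any $d\mid f$ automatically satisfies $d\mid(m,t)$, so the joint constraint $d\mid f$ and $d\mid(m,t,n)$ collapses to $d\mid(f,n)$; equivalently, $(f,(m,t,n))=(f,n)$ whenever $f\mid(m,t)$. The inner sum is therefore $\sum_{d\mid(f,n)}d\,\mu(f/d)$, which is precisely Kluyver's (von~Sterneck) expression for Ramanujan's sum, $c_f(n)=\sum_{d\mid(f,n)}d\,\mu(f/d)$. Substituting this and relabeling $f$ as $d$ produces the claimed identity. I expect the main obstacle to be the bookkeeping in the change of variables $f=de$ and the interchange of summation order, together with the verification that $(f,(m,t,n))=(f,n)$ for $f\mid(m,t)$; once the inner sum is isolated, recognizing it as $c_f(n)$ is the single nontrivial number-theoretic identity on which the argument rests. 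As a sanity check, when $(m,t)\mid n$ one has $c_d(n)=\varphi(d)$ for every $d\mid(m,t)$, recovering the Burnside count of all necklaces, consistent with $\mathcal{N}_{m,t,\left<n\right>}$ then containing every frequency.
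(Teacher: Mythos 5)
Your proof is correct, but it runs in the opposite direction from the paper's and rests on a different key identity. The paper starts from the right-hand side: it applies its own Proposition~\ref{Prop:RamanujanSumExp} to expand $\frac{1}{m}\sum_{d\mid(m,t)}\binom{m/d}{t/d}c_d(n)$ into a double sum, recognizes the inner M\"obius sum $\sum_{d_2}\frac{d_1}{m}\mu(d_2)\binom{m/(d_1d_2)}{t/(d_1d_2)}$ as the aperiodic necklace count $\left|\mathcal{N}_{m/d_1,t/d_1,\left<1\right>}\right|$ (citing Me\v{s}trovi\'{c}), and then reassembles via Proposition~\ref{Prop:NecklaceFreqFuncNewForm}. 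You instead start from the left-hand side, insert the Lyndon-word formula for the aperiodic counts, rearrange the double sum with the substitution $f=de$, and collapse the inner sum $\sum_{d\mid(f,n)}d\,\mu(f/d)$ using Kluyver's (von Sterneck's) classical formula for $c_f(n)$. The ingredients are equivalent --- the paper's Proposition~\ref{Prop:RamanujanSumExp} is, in substance, the Kluyver expansion packaged as a general lemma --- but your route has a concrete advantage: Proposition~\ref{Prop:RamanujanSumExp} is never actually proved in the paper (its induction is explicitly left as an exercise), whereas Kluyver's identity is a standard, citable fact, so your argument is more self-contained. What the paper's approach buys in exchange is reuse: the same Proposition~\ref{Prop:RamanujanSumExp} also drives Proposition~\ref{Prop:ModPartixFuncNewForm}, so one lemma serves two results. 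Your change-of-variables bookkeeping and the observation that $(f,(m,t,n))=(f,n)$ for $f\mid(m,t)$ are both handled correctly, and the Burnside sanity check at the end is a nice touch.
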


\begin{proof}
	Applying Proposition \ref{Prop:RamanujanSumExp} to the right hand side gives:
	$$\frac{1}{m}\sum_{d\mid\left(m,t\right)}\binom{m/d}{t/d}c_d\left(n\right)=\sum_{d_1\mid\left(m,t\right)}\left[d_1\mid n\right]\sum_{d_2\mid\left(\left(m,t\right)/d_1\right)}\frac{d_1}{m}\mu\left(d_2\right)\binom{m/\left(d_1d_2\right)}{t/\left(d_1d_2\right)}$$

	The inner summation is known to be equal to the number of aperiodic bi-color necklaces with $t/d_1$ black beads and $\left(m-t\right)/d_1$ white beads \cite{Mestrovic}, which according to the annotation of this paper is $\left|\mathcal{N}_{m/d_1,t/d_1,\left<1\right>}\right|$. Thus the right hand side is turned into the form $\sum_{d\mid\left(m,t,n\right)}\left|\mathcal{N}_{m/d,t/d,\left<1\right>}\right|$. This is equal to $\left|\mathcal{N}_{m,t,\left<n\right>}\right|$ according to Proposition \ref{Prop:NecklaceFreqFuncNewForm}.
\end{proof}

\subsection{Proof of Theorem \ref{Thm:NecklacePartixBijection}}\label{SubSec:NecklaceBiject}

This theorem can be shown by comparing Proposition \ref{Prop:NecklaceFreqFuncNewForm2} with Theorem \ref{Thm:CmPartixForm1}. One can easily notice that the two equations would have been the same if not for the sign factor in Theorem \ref{Thm:CmPartixForm1}.

Indeed, they are the same when the said sign is always positive. This happens when $m$ is odd, or when $m$ is even but the 2-adic valuation of $t$ is greater than that of $m$. Thus the first statement in Theorem \ref{Thm:NecklacePartixBijection} is true.

If $v_2\left(m\right)\geq v_2\left(t\right)\geq 1$, then $t$ can be written as $t=2^vT$, where $T$ is odd and $v\geq 1$. Also, let $m=2^vM$ and $N=n/2^{v_2\left(n\right)}$. Subtracting Theorem \ref{Thm:CmPartixForm1} from Proposition \ref{Prop:NecklaceFreqFuncNewForm2} gives:

$$Q_{m,t}\left(n\right)=\left|\mathcal{N}_{m,t,\left<n\right>}\right|-\frac{c_{2^v}\left(n\right)}{2^{v-1}}\left|\mathcal{N}_{M,T,\left<N\right>}\right|$$

It is a standard property of Ramanujan's sum that $c_{2^v}\left(n\right)=0$ when $v_2\left(n\right)<v_2\left(t\right)-1$. Thus the second statement in Theorem \ref{Thm:NecklacePartixBijection} is properly explained.

When $v_2\left(n\right)=v_2\left(t\right)-1$, $c_{2^v}\left(n\right)=-2^{v-1}$. The equation above becomes $Q_{m,t}\left(n\right)=\left|\mathcal{N}_{m,t,\left<n\right>}\right|+\left|\mathcal{N}_{m,t,2^v\left<N\right>}\right|$, where the notation $2^v\left<N\right>$ means each member of $\left<N\right>$ is multiplied by $2^v$. Since $n=2^{v-1}N$ in this case, the sum above can be written as $\left|\mathcal{N}_{m,t,\left<2n\right>}\right|$. This shows that the third statement in Theorem \ref{Thm:NecklacePartixBijection} is true.

Finally, when $v_2\left(n\right)\geq v_2\left(t\right)$, then $c_{2^v}\left(n\right)=2^{v-1}$, and one has $Q_{m,t}\left(n\right)=\left|\mathcal{N}_{m,t,\left<n\right>}\right|-\left|\mathcal{N}_{m,t,2^v\left<N\right>}\right|$. Note that $\left|\mathcal{N}_{m,t,\left<n\right>}\right|$ really can be written as $\left|\mathcal{N}_{m,t,\left<2^vN\right>}\right|$ as the allowed frequencies are limited by the divisors of $t$ and so cannot be divisible by $2^{v+1}$. Then $\left|\mathcal{N}_{m,t,\left<n\right>}\right|-\left|\mathcal{N}_{m,t,2^v\left<N\right>}\right|=\left|\mathcal{N}_{m,t,\left<2^vN\right>}\right|-\left|\mathcal{N}_{m,t,2^v\left<N\right>}\right|=\left|\mathcal{N}_{m,t,\left<2^{v-1}N\right>}\right|$, which can be written as $\left|\mathcal{N}_{m,t,\left<\left(t,n\right)/2\right>}\right|$.

\subsection{Revisiting the Partition of the Identity Element}\label{SubSec:NecklaceIdentityElem}

It is a corollary of the first statement of Theorem \ref{Thm:NecklacePartixBijection} that the set of necklaces of $m$ beads is equal to the number of partitions of the identity element of the group $\mathbb{Z}/m\mathbb{Z}$ with distinct parts when $m$ is odd. This confirms the findings from previous authors. For the case when $m$ is even, it is now clear that there is still a bijection between the partitions counted by $\sum_{t}Q_{m,t}\left(0\right)$ and some subset of the set of necklaces of $m$ beads. It is just that one has to avoid counting the necklaces with frequency $u$, where $v_2\left(u\right)=v_2\left(t\right)$ whenever $v_2\left(m\right)\geq v_2\left(t\right)\geq 1$.

As an illustration, consider the case $m=8$. The number of possible necklaces is 36, and the number of partitions of the identity element of $\mathbb{Z}/m\mathbb{Z}$ with distinct parts is 32. To account for the difference of 4, one can consult Theorem \ref{Thm:NecklacePartixBijection} and notice that the necklaces with frequency equal to 2 should not be counted when $t=2$ and $t=6$, necklaces with frequency equal to 4 should not be counted with $t=4$, and necklaces with frequency equal to 8 should not be counted when $t=8$. There is one necklace meeting each of these 4 descriptions, as desired.

\begin{figure}[ht]
	\centering
	\includegraphics[scale=0.21]{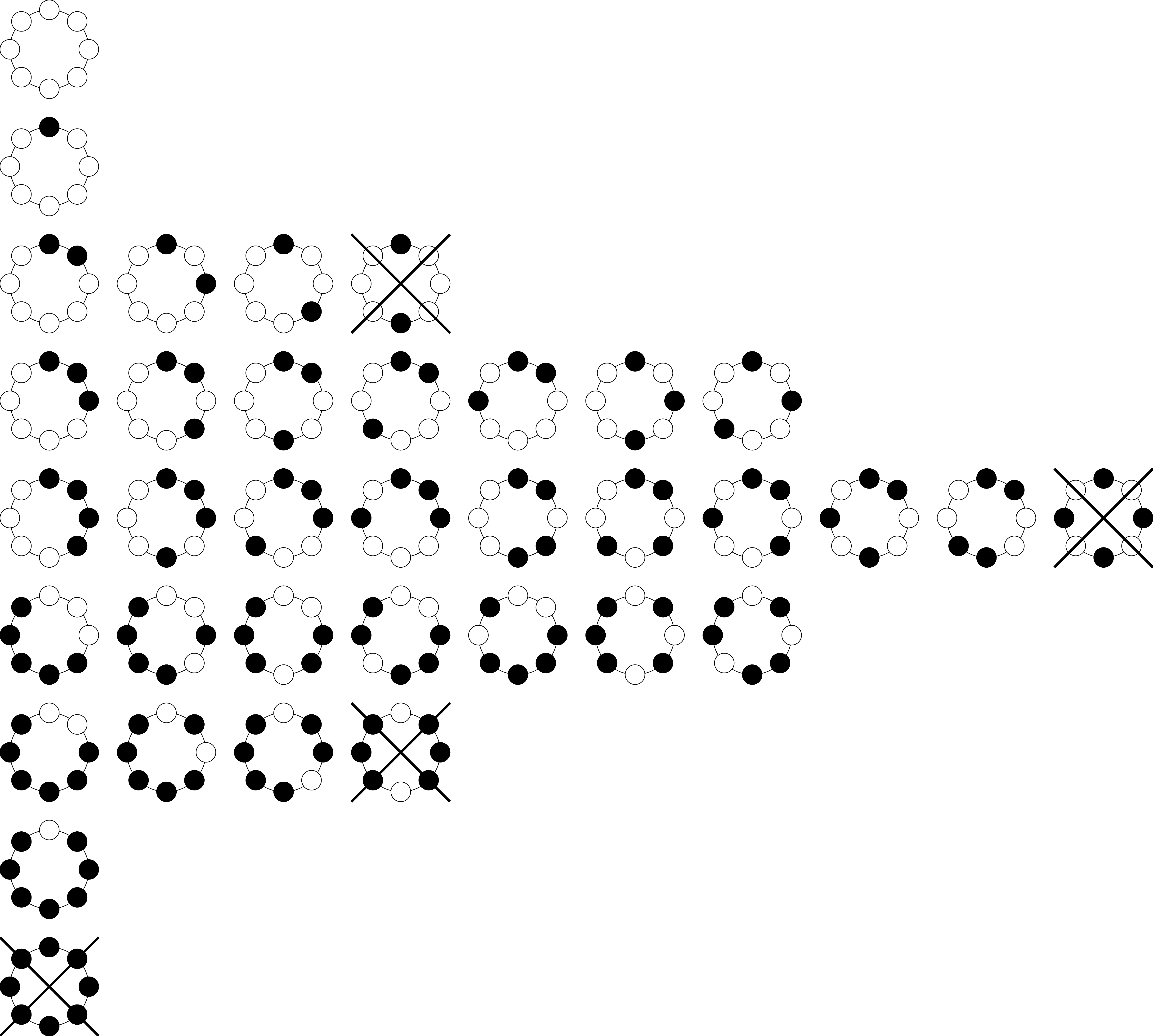}
	\caption{Necklaces with 8 beads.\label{fig:Necklace8}}
\end{figure}

\end{document}